\definecolor{webgreen}{rgb}{0,.5,0}
\definecolor{webbrown}{rgb}{.6,0,0}
\newtheorem{lem}{Lemma}[section]
\newtheorem{theorem}{Theorem}[section]
\theoremstyle{definition}
\newtheorem{Def}{Definition}
\newtheorem{problem}{Problem}
\theoremstyle{remark}
\newtheorem{remark}{Remark}
\DeclareMathOperator{\Mod}{mod}
\DeclareMathOperator{\lcm}{lcm}
\DeclareMathOperator{\Card}{Card}
\newcommand{\tL}{\mathtt 1} 
\newcommand{\tO}{\mathtt 0}
   \def\MR#1{}
\newcommand*{\rom}[1]{\expandafter\@slowromancap\romannumeral #1@}
\newcommand\nnfootnote[1]{%
  \begin{NoHyper}
  \renewcommand\thefootnote{}\footnote{#1}%
  \addtocounter{footnote}{-1}%
  \end{NoHyper}
}
\begin{document}

\begin{center}
\vskip 1cm{\LARGE\bf 
Correlation of the Rudin--Shapiro  \\
\vskip .1in
sequence along prime numbers
}
\vskip 1cm
\large
Pierre Popoli\\
Université de Lorraine, CNRS, IECL, F-54000 Nancy, France\\
\href{mailto: pierre.popoli@univ-lorraine.fr}{\tt pierre.popoli@univ-lorraine.fr}
\end{center}

\vskip .2 in

\begin{abstract}
The correlation measure of order $k$ is a measure of pseudorandomness that quantifies the similarity between a sequence and its shifts. It is known that the correlation of order $4$ is large for the Rudin--Shapiro sequence despite having nice pseudorandom properties with respect to the correlation of order $2$. In this paper, we prove a radically different behavior along the subsequence of prime numbers and continue the investigation towards the pseudorandomness of subsequences of automatic sequences. This result generalizes the result of Aloui, Mauduit, and Mkaouar (2021) about the correlation of the sum of digits function along prime numbers. 
\end{abstract}


\setcounter{tocdepth}{1}
\tableofcontents

\nnfootnote{\textbf{Keywords}: exponential sums, prime numbers, Rudin--Shapiro sequence, correlations.}

\section{Introduction}

\subsection{Notations}

In this paper, we write $\mathbb{N}=\{0,1,2,\ldots\}$, and the variable $p$ denotes a prime number. We write $e(x)=\exp(2\pi i x)$ for any real number $x$. For $n \in \mathbb{N}$, $v_p(n)$ stands for the $p$-adic valuation of $n$, i.e.\begin{align*} 
v_p(n)=\max \{\nu \in \mathbb{N} : p^{\nu}  \mid n\}.
\end{align*} We denote by $\Lambda$ the von Mangoldt function, that is $\Lambda(n)=\ln p$ if $n=p^{\nu}$ for some $\nu \geq 1$, and $\Lambda(n)=0$ if $n$ is not a prime power. We further denote by $\pi(x)$ the number of primes $p \leq x$ and $\pi(x;m,\ell)$ the number of prime $p\leq x$ that are congruent to $\ell \pmod m$ for some coprime pair $(\ell,m) \in \mathbb{N} \times \mathbb{N}$, $m\geq 1$. For $\alpha=(\alpha_0,\ldots,\alpha_k) \in \mathbb{R}^{k+1}$, we denote \begin{align*}
\tilde{\alpha}=\sum \limits_{j=1}^k j \alpha_j, \qquad \tilde{\alpha_i}=\sum \limits_{j=i}^k \alpha_j, \quad 0\leq i \leq k.
\end{align*} For two vectors $a,b  \in \mathbb{R}^k$, we denote by $a\cdot b=\sum_{i=1}^k a_ib_i$ the euclidean inner product. For any real $x$, we define $\lVert x \rVert$ as the distance from $x$ to the nearest integer. For an integer $M\geq 1$, we denote by $[M]$ the set $\{0,1,\ldots,M\}$. We denote $\log$ as the logarithm in base $10$ and $\log_b$ the logarithm in base $b$ for some integer $b\geq 2$. A binary word $\omega$ is a finite sequence of letters over the alphabet $\{\tO,\tL\}$, denoted $\omega=a_1\cdots a_k$ with $a_i \in \{\tO,\tL\}$. For $n\in \mathbb{N}$, we denote $(n)_2$ the binary word $\omega=\varepsilon_{\ell}\cdots\varepsilon_0$ such that $n=\sum_{0\leq i \leq \ell}\varepsilon_i2^{i}$. 

\subsection{Correlation measure}

The series of papers ``On finite pseudorandom binary sequences''~\rom{1}--\rom{7}, initiated by Mauduit and S\'{a}rk\"{o}zy in 1997, studies several measures of complexity for binary pseudorandom sequences. Among these measures of complexity, the correlation measure is one of the most important ones and studied in various contexts. In this section, we introduce the correlation measure of order $k$ and some theoretical results about this measure of complexity. 

\smallskip

Let $\mathcal{S}=(s_n)_{n\geq 0}$ be a sequence on $\{-1,1\}$ and let $\mathcal{S}_N$ be the finite sequence of its $N$ first terms. Let $N,k,M\geq 2$ be integers, $D=(d_1,\ldots,d_k) \in \mathbb{N}^k$ such that $d_1<d_2<\cdots<d_k$ and $M+d_k\leq N$. We denote $V(\mathcal{S}_N,M,D)$ the following sum, \[V(\mathcal{S}_N,M,D)=\sum \limits_{0\leq n \leq M-1}s_{n+d_1}s_{n+d_2}\cdots s_{n+d_k}.\] 

\begin{Def} \label{correlation_order_k}
Let $k\geq 2$ be an integer. The \emph{correlation measure of order $k$} of $\mathcal{S}=(s_n)_{n\geq 0}$, denoted $C_k(\mathcal{S},N)$, is defined by \[C_k(\mathcal{S},N)=\max \limits_{M,D} |V(\mathcal{S}_N,M,D)|=\max \limits_{M,D} \left| \sum \limits_{0\leq n \leq M-1}s_{n+d_1}s_{n+d_2}\cdots s_{n+d_k} \right|, \] where the maximum is taken over all $D=(d_1,\ldots,d_k)\in \mathbb{N}^k$ and $M$ such that $d_1<d_2<\cdots<d_k$ and $M+d_k \leq N$.
\end{Def} 

The correlation measure of order $k$ measures how many different values a sequence and its shifts have. If this is the case, the number of cancellations in the sum $V(\mathcal{S}_N,M,D)$ is small, and its correlation measure of order $k$ will be large. Thus, it is reasonable to think that the correlation measures of order $k$ for a random binary sequence will be small. Cassaigne, Mauduit, and S\'{a}rk\"{o}zy~\cite{CMS2001} have specified this postulate in the following theorem.

\begin{theorem}[\cite{CMS2001}]
For any $k \geq 2$ and for any real $\varepsilon>0$, there exist $N_0=N_0(\varepsilon,k)$ and $\delta=\delta(\varepsilon,k)>0$ such that for any $N \geq N_0$ we have with probability $>1-2\varepsilon$, \[\delta \sqrt{N} < C_k(\mathcal{S},N) < 5 \sqrt{kN\log N}. \]
\end{theorem}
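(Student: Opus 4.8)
The plan is to treat the two inequalities separately, working in the standard probabilistic model where $\mathcal{S}=(s_n)_{n\ge 0}$ is a sequence of independent random variables, each uniform on $\{-1,1\}$. Throughout I abbreviate $P_n = s_{n+d_1}\cdots s_{n+d_k}$, so that $V(\mathcal{S}_N,M,D)=\sum_{n=0}^{M-1}P_n$; note $\mathbb{E}[P_n]=0$ and $P_n\in\{-1,1\}$. The upper bound I would obtain from a concentration inequality for a single pair $(M,D)$ followed by a union bound over all admissible pairs; the lower bound I would obtain from a second-moment (anti-concentration) argument applied on many disjoint blocks of indices.

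For the upper bound, fix $(M,D)$ and reveal the coordinates $s_0,s_1,\dots$ in increasing order, with $\mathcal{F}_i=\sigma(s_0,\dots,s_i)$. The crucial observation is that $\mathbb{E}[P_n\mid\mathcal{F}_i]$ vanishes unless every factor of $P_n$ has already been revealed, i.e. unless $n+d_k\le i$, in which case it equals $P_n$. Hence the Doob martingale $Y_i=\mathbb{E}[V\mid\mathcal{F}_i]=\sum_{n+d_k\le i}P_n$ has increment $Y_i-Y_{i-1}=P_{i-d_k}\in\{-1,0,1\}$, a single term bounded by $1$, and only $M$ of these increments are nonzero. Azuma's inequality then gives $\mathbb{P}(|V(\mathcal{S}_N,M,D)|>\lambda)\le 2\exp(-\lambda^2/2M)\le 2\exp(-\lambda^2/2N)$. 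Taking $\lambda=5\sqrt{kN\log N}$ and summing over the at most $N\binom{N}{k}\le N^{k+1}$ admissible pairs $(M,D)$, the resulting bound is $2N^{k+1}\exp(-\lambda^2/2N)$; since $\lambda^2/2N=\tfrac{25}{2}k\log N$ this is $o(1)$ as $N\to\infty$ for every $k\ge 2$, so it drops below $\varepsilon$ once $N\ge N_0(\varepsilon,k)$.

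For the lower bound, fix the offset vector $D=(0,1,\dots,k-1)$, so $P_n=s_n s_{n+1}\cdots s_{n+k-1}$. Pairwise independence of the $P_n$ (the smallest index of $P_nP_{n'}$ occurs with multiplicity one when $n\ne n'$) yields $\mathbb{E}[V^2]=M$, and a short covering lemma gives the matching fourth-moment bound $\mathbb{E}[V^4]\le 3M^2$: in any even configuration among $n_1,\dots,n_4$ the smallest index $\min_i n_i$ must be repeated, which forces the indices to pair up and leaves only $O(M^2)$ surviving quadruples. The Paley--Zygmund inequality then produces an absolute constant $c_0>0$ with $\mathbb{P}(|V|>\sqrt{M/2})\ge c_0$. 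To upgrade this constant-probability event to one of probability $>1-\varepsilon$, I would split $\{0,1,\dots,N-1\}$ into $T$ consecutive blocks of length $\sim N/T$ and run one such correlation sum, on its own variables, inside each block. Because the blocks are disjoint the corresponding sums $V_1,\dots,V_T$ are independent, so $\mathbb{P}\big(\max_t|V_t|\le \delta\sqrt N\big)\le(1-c_0)^T$ with $\delta\asymp T^{-1/2}$; choosing $T=T(\varepsilon)$ with $(1-c_0)^T<\varepsilon$ and $\delta=\delta(\varepsilon,k)$ accordingly gives $C_k(\mathcal{S},N)\ge\max_t|V_t|>\delta\sqrt N$ with probability $>1-\varepsilon$. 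Combining the two failure events yields both inequalities with probability $>1-2\varepsilon$.

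The cleanest part is the upper bound, where identifying the right filtration makes the increments of the Doob martingale single terms bounded by $1$ and reduces everything to Azuma plus a crude union bound; the constant $5$ is exactly what is needed for $\tfrac{25}{2}k$ to beat the exponent $k+1$ coming from the number of pairs. The delicate point is the lower bound: a single correlation sum only gives $|V|\gtrsim\sqrt N$ with constant probability, so the genuine work is converting this into a high-probability statement, which is why the argument must spend independence across many disjoint blocks and accept a constant $\delta$ that degrades with $\varepsilon$. The supporting fourth-moment estimate, though elementary, is where one must be careful, since it is precisely the control of $\mathbb{E}[V^4]/(\mathbb{E}[V^2])^2$ that powers the Paley--Zygmund anti-concentration.
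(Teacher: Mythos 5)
The paper does not prove this statement; it is quoted from~\cite{CMS2001}, so there is no internal proof to compare against, and I can only judge your argument on its own merits and against the original. On those terms it is correct. Your upper bound (Doob martingale in the filtration $\mathcal{F}_i=\sigma(s_0,\dots,s_i)$, increments $Y_i-Y_{i-1}=P_{i-d_k}$ bounded by $1$, Azuma, then a union bound over the at most $N^{k+1}$ admissible pairs $(M,D)$, with $\tfrac{25}{2}k$ beating $k+1$) is sound, and your lower bound (pairwise orthogonality giving $\mathbb{E}[V^2]=M$, the fourth-moment count $\mathbb{E}[V^4]\le 3M^2$ via the ``smallest index must repeat'' observation, Paley--Zygmund, then independent copies on $T(\varepsilon)$ disjoint blocks realized as sums $V(\mathcal{S}_N,M_t,(a_t,\dots,a_t+k-1))$) is also sound. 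Two small points deserve a sentence each in a written-up version: (i) consecutive blocks share the $k-1$ boundary variables $s_{b_t+1},\dots,s_{b_t+k-1}$, so you must insert gaps of length $k-1$ to get genuine independence of the $V_t$ --- harmless since $T$ depends only on $\varepsilon$; (ii) the last block must end at index $N-k$ so that $M_t+d_k\le N$. Finally, there is a simplification worth knowing: for any fixed admissible $D$ the products $P_n$ are not merely orthogonal but \emph{mutually independent} Rademacher variables, because for every nonempty $S\subseteq\{0,\dots,M-1\}$ the index $\min(S)+d_1$ occurs exactly once in $\prod_{n\in S}P_n$, forcing $\mathbb{E}\bigl[\prod_{n\in S}P_n\bigr]=0$; for $\pm1$-valued variables this vanishing of all nontrivial Fourier coefficients is equivalent to mutual independence. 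Granting this, the upper bound is plain Hoeffding for i.i.d.\ signs, and the lower bound follows from the central limit theorem applied to a single sum with $D=(0,1,\dots,k-1)$ and $M=N-k+1$, with no need for Paley--Zygmund or the block decomposition. This is essentially the route of~\cite{CMS2001}; your proof reaches the same constants by a slightly longer but valid path.
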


Alon et al.~\cite{AKMMR2007} improved this result, particularly for the lower bound. 

\begin{theorem}[\cite{AKMMR2007}]
For any $N_0=N_0(\varepsilon)$, there exists a $N_0=N_0(\varepsilon)$ such that for any $N\geq N_0$, we have with probability $1-\varepsilon$, \[\frac{2}{5}\sqrt{N\log \binom{N}{k}}<C_k(\mathcal{S},N)<\frac{7}{4}\sqrt{N\log \binom{N}{k}}\] for any integer $k$ such that $2 \leq k \leq N/4$.
\end{theorem}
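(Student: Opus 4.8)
The plan is to work in the standard model where $\mathcal{S}=(s_n)_{n\geq 0}$ is a sequence of independent, uniformly distributed signs in $\{-1,1\}$, and to prove the two inequalities by separate mechanisms: the upper bound by a union bound over all admissible pairs $(M,D)$ together with a concentration inequality, and the lower bound by a second-moment argument showing that among the many admissible $D$ at least one forces a large correlation sum. In both directions the driving quantity is the number of admissible pairs: for a fixed $k$ there are at most $N\binom{N}{k}$ choices (at most $N$ values of $M$ and a $k$-subset $D$ of $\{0,\dots,N-1\}$), so its logarithm is $(1+o(1))\log\binom{N}{k}$ in the range $2\le k\le N/4$, which is exactly the factor appearing in the bounds. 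To obtain the statement uniformly in $k$, I would finally union over the (at most $N/4$) values of $k$, absorbing this into $N_0(\varepsilon)$.

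For the upper bound, fix an admissible $(M,D)$ and write $V(\mathcal{S}_N,M,D)=\sum_{n=0}^{M-1}T_n$ with $T_n=s_{n+d_1}\cdots s_{n+d_k}\in\{-1,1\}$. The key observation is a martingale structure: reveal the $s_i$ in order of increasing index and set $\mathcal{G}_n=\sigma(s_i:i\le n+d_k)$. Since $d_{k-1}<d_k$, the factors $s_{n+d_1},\dots,s_{n+d_{k-1}}$ are $\mathcal{G}_{n-1}$-measurable while $s_{n+d_k}$ is a fresh symmetric sign, so $\mathbb{E}[T_n\mid\mathcal{G}_{n-1}]=0$ and $(T_n)$ is a martingale difference sequence with $|T_n|\le 1$. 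Azuma–Hoeffding then gives $\Pr\!\big[|V(\mathcal{S}_N,M,D)|>t\big]\le 2\exp(-t^2/2M)\le 2\exp(-t^2/2N)$. Taking $t=\tfrac{7}{4}\sqrt{N\log\binom{N}{k}}$ makes the per-pair bound $2\binom{N}{k}^{-49/32}$, and the union over $\le N\binom{N}{k}$ pairs has total mass $O\!\big(N\binom{N}{k}^{-17/32}\big)\to 0$; the slack in the constant $\tfrac74$ is precisely what keeps this summable for all $2\le k\le N/4$, so for $N\ge N_0(\varepsilon)$ the upper bound holds with probability $>1-\varepsilon/2$.

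For the lower bound, I would fix $M$ of order $N$ (say $M=\lfloor N/2\rfloor$, so that $D$ ranges over $k$-subsets of $\{0,\dots,N-M\}$) and let $X$ count those $D$ with $V(\mathcal{S}_N,M,D)>t$ for $t=\tfrac{2}{5}\sqrt{N\log\binom{N}{k}}$, aiming to show $\Pr[X>0]\to 1$. This needs two ingredients. First, an anti-concentration lower bound $\Pr[V(\mathcal{S}_N,M,D)>t]\ge p$ for each single $D$, obtained from a moderate-deviation (Gaussian-type) approximation of the martingale sum $\sum_n T_n$, whose variance is $M\asymp N$; since $t/\sqrt{M}\asymp\sqrt{\log\binom{N}{k}}$, this produces $p$ of order $\binom{N}{k}^{-c}$ with $c<1$, whence $\mathbb{E}[X]\ge p\cdot(\#D)\to\infty$, the constant $\tfrac25$ being chosen with enough slack for this. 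Second, a covariance estimate showing that for all but a negligible fraction of pairs $D,D'$ the sums $V(\cdot,M,D)$ and $V(\cdot,M,D')$ are nearly uncorrelated, so that $\mathbb{E}[X^2]\le(1+o(1))\mathbb{E}[X]^2$ and Chebyshev gives $\Pr[X=0]\to 0$. Combining the two one-sided events yields both bounds simultaneously with probability $>1-\varepsilon$ for $N\ge N_0(\varepsilon)$.

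The main obstacle I expect is the lower bound, and specifically the interaction of its two ingredients. The threshold $t$ lives in the moderate-deviation regime $t/\sqrt{M}\to\infty$, where one must justify a Gaussian-type lower tail for the \emph{dependent} products $T_n$ rather than for independent signs, and one must simultaneously bound the dependence between $V(\cdot,M,D)$ across different shift sets $D$ sharply enough that the off-diagonal contribution to $\mathbb{E}[X^2]$ does not overwhelm $\mathbb{E}[X]^2$. Pinning down quantitatively matching constants (the $\tfrac25$ against the $\tfrac74$) is what forces the delicate bookkeeping here; by contrast, once the martingale structure is noticed the upper bound is essentially routine.
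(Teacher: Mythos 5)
This statement is quoted verbatim from Alon, Kohayakawa, Mauduit, Moreira and R\"odl \cite{AKMMR2007}; the paper under review states it with a citation and gives no proof, so there is no internal argument to compare yours against. Judged on its own merits, your upper bound is essentially correct and complete as a sketch: the martingale-difference observation (at step $n$ only $s_{n+d_k}$ is ``fresh'', since $d_j\le d_k-1$ for $j<k$), Azuma--Hoeffding, and the union over the at most $N\binom{N}{k}$ admissible pairs $(M,D)$ give a failure probability $O\bigl(N\binom{N}{k}^{-17/32}\bigr)$ for each $k$, and the sum over $2\le k\le N/4$ is dominated by the $k=2$ term, of order $N^{-1/16}$, so the constant $7/4$ survives the uniformity in $k$.

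The lower bound is where the theorem actually lives, and your sketch has two genuine gaps there, both of which you flag but neither of which is routine. First, the anti-concentration input: for $k$ near $N/4$ one has $\log\binom{N}{k}\asymp N$, so the threshold $t=\frac25\sqrt{N\log\binom{N}{k}}$ is a positive fraction of the trivial maximum $M$ of $V(\mathcal{S}_N,M,D)$. This is the large-deviation regime, not the moderate-deviation one, and a Gaussian-type lower tail is not merely unjustified but quantitatively wrong there (the correct exponent is $M\,I(t/M)$ with $I$ the Cram\'er rate function for $\pm1$ variables, which exceeds $t^2/2M$); moreover, even that is only available once the products $T_n$, which share factors across different $n$, have been decoupled. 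The standard repair is structural rather than analytic: restrict to shift sets with, say, $d_k-d_{k-1}\ge M$, so that conditionally on $(s_i)_{i<d_k}$ the sum becomes $\sum_n\epsilon_n s_{n+d_k}$ with deterministic signs $\epsilon_n$, i.e.\ a genuinely independent sum to which classical binomial lower-tail estimates apply. Second, the second moment: translates $D'$ of $D$ by $j$ satisfy $V(\mathcal{S}_N,M,D')=V(\mathcal{S}_N,M,D)+O(j)$, so the events $\{V>t\}$ come in clusters of $\Theta(N)$ essentially identical copies, and ``nearly uncorrelated for all but a negligible fraction of pairs'' has to be engineered by passing to a well-separated subfamily of $D$'s that is still large enough for its logarithm to be comparable to $\log\binom{N}{k}$. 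As written, your proposal identifies a workable skeleton and the correct bookkeeping quantities, but it does not yet constitute a proof of the left-hand inequality.
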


Therefore, if a sequence possesses a correlation measure of order $k$ larger (resp. smaller) than the expected order for a truly random binary sequence, i.e., $\sqrt{N\log N}$ up to a multiplicative constant, then we say that the correlation of this sequence is large (resp. small). 

\subsection{Rudin--Shapiro sequence}

The Rudin--Shapiro sequence $\mathcal{R}$, or Golay--Shapiro sequence, was first discovered by Golay~\cite{Golay1951} and was studied by Rudin~\cite{Rudin1959} building upon work by Shapiro~\cite{Shapiro1952}. Notice that the present paper considers the Rudin--Shapiro sequence on the binary alphabet $\{-1,1\}$. 

\begin{Def}
Let us denote by $\mathcal{R}=(r_n)_{n\geq 0}$ the Rudin--Shapiro, or Golay--Shapiro, sequence defined by \begin{align*}
r_n=(-1)^{r_{\tL\tL}(n)},
\end{align*}
where $r_{\tL \tL}(n)$ denotes the number of occurences of the block $\tL\tL$ in the representation of $n$ in base $2$. 
\end{Def}

The first values of the Rudin--Shapiro sequence are \[ 1, 1, 1, -1, 1, 1, -1, 1, 1, 1, 1, -1, -1, -1, 1, -1, 1, 1,\ldots \] and this is the sequence \href{https://oeis.org/A020985}{\underline{A020985}} in the \emph{On-line Encyclopedia of Integer Sequences} (OEIS)~\cite{oeis}. The sequence $\mathcal{R}$ was originally studied for its properties concerning the $L^2$ norm. We define the $L^2$ norm of a function $f$ by \[\left \lVert f\right \rVert_2=\left(\frac{1}{2\pi} \int_{0}^{2\pi} |f(t)|^2 \right)^{1/2} .\] For every sequence $(a_n)_{n\geq 0}$ over $\{-1,1\}$, we have \[ \sup \limits_{\theta \in \mathbb{R}} \left| \sum \limits_{0\leq n <N} a_ne^{in\theta} \right| \geq \left \lVert\sum \limits_{0\leq n <N} a_ne^{in\theta} \right \rVert_2=\sqrt{N} .\] Furthermore, almost every sequence $(a_n)_{n\geq 0}$ over $\{-1,1\}$ satisfies \[ \sup \limits_{\theta \in \mathbb{R}} \left| \sum \limits_{0\leq n <N} a_ne^{in\theta} \right| =O(\sqrt{N\log N}), \] see~\cite{SZ1954}. The Rudin--Shapiro is an example of a sequence with more cancelations than expected. Indeed, the sequence $\mathcal{R}$ satisfies the \emph{square-root property}, that is, there exists a constant $C>0$ such that for all $N\geq 0$ we have \[\sup \limits_{\theta \in \mathbb{R}} \left| \sum \limits_{0\leq n <N} r'_ne^{in\theta} \right| \leq C \sqrt{N}. \] Moreover, the power of $N$ is optimal and the best known constant is $C=(2+\sqrt{2})\sqrt{3/5}$, see~\cite[Page 122]{AS2003}. It is conjectured, and still, an open problem, that the optimal constant is $C=\sqrt{6}$. 

\smallskip

The Rudin--Shapiro $\mathcal{R}$ sequence is a famous automatic sequence, see~\cite{AS2003} for more details on this type of sequence. More precisely, $\mathcal{R}$ is a $2$-automatic sequence, and this means that the $n$th term of the sequence is computed by a deterministic finite automaton with output (DFAO) after reading the representation of $n$ in base $2$.

\begin{figure}[H]
\begin{center}
\begin{tikzpicture} [node distance = 3cm, on grid, every loop/.style={-stealth}, every initial by arrow/.style = {-stealth},]
 
    \node (q0) [state, initial, initial text=] {$q0/+1$};
    \node (q1) [state, right = of q0] {$q1/+1$};
    \node (q2) [state, right = of q1] {$q2/-1$};
    \node (q3) [state, right = of q2] {$q3/-1$};
    
    \path [-stealth]
    (q0) edge [bend left] node [above] {$\tL$}   (q1)
    (q0) edge [loop above] node {$\tO$}()
    
    (q1) edge [bend left] node [below] {$\tO$}   (q0)
    (q1) edge [bend left] node [above] {$\tL$}   (q2)
    
    (q2) edge [bend left] node [above] {$\tO$}   (q3)
    (q2) edge [bend left] node [below] {$\tL$} (q1)
    
    (q3) edge [bend left] node [below] {$\tL$}   (q2)
    (q3) edge [loop above] node {$\tO$}();
 
\end{tikzpicture}
\caption{DFAO generating the Rudin--Shapiro sequence $\mathcal{R}$.}
\end{center}
\end{figure}
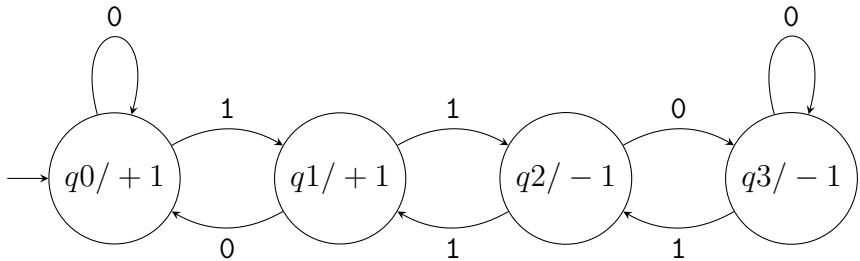

Automatic sequences are ``bad'' pseudorandom sequences with respect to a number of diffrent complexity measures. For instance, a classical measure of complexity is the subword complexity $p_{\mathcal{S}}(k)$, that is, the number of different patterns of length $k$ that appear in the whole sequence. Over a binary alphabet, the expected order of the subword complexity of a truly random binary sequence is $2^k$. However, it is known that the subword complexity of an automatic sequence is bounded by a linear function, see~\cite[Chapter 10]{AS2003} for more details. Notice that for the Rudin--Shapiro sequence, we also have an exact formula and $p_{\mathcal{R}}(k)=8k-8$, for all $k \geq 8$, see~\cite{AS1993}. Furthermore, the correlation of order $2$ of automatic sequences is large, see~\cite{MW2018}. 

\begin{theorem}[\cite{MW2018}] \label{corr2auto}
Let $\mathcal{S}$ be a $q$-automatic sequence. We have \[C_2(\mathcal{S},N) \gg N, \] where the constant depends on $q$ and the number of states of the underlying automaton.
\end{theorem}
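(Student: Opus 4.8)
The plan is to exploit the freedom, built into Definition~\ref{correlation_order_k}, to choose both the window length $M$ and the shift vector $D$: we do \emph{not} need $s_{n+d_1}s_{n+d_2}$ to be biased over the whole range $[0,N)$, only over a window $M$ that we are free to make as short as we like. In particular, if we can locate two positions $a<b$ in $[0,N)$ together with a length $L$ such that the two factors agree, i.e.
\[ s_{a+r}=s_{b+r}\qquad(0\le r<L),\qquad b+L\le N, \]
then choosing $d_1=a$, $d_2=b$ and $M=L$ gives $s_{n+d_1}s_{n+d_2}=(s_{a+n})^2=1$ for every $0\le n\le M-1$, so that $V(\mathcal S_N,M,D)=L$ and hence $C_2(\mathcal S,N)\ge L$. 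Thus the theorem reduces to showing that $\mathcal S$ contains a \emph{repeated factor of length $L\gg N$ occurring within the first $N$ terms}, with the implied constant controlled by $q$ and the number of states.

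To produce such a repetition I would use the automaton directly. Write $\mathcal S$ as $s_n=\tau(\delta(q_0,(n)_q))$ for a DFAO with state set $Q$, transition function $\delta$, initial state $q_0$ and output $\tau$, reading digits most significant first (as in the figure). The key point is that for any scale $\ell\ge 0$ the base-$q$ representation of $jq^\ell+r$ with $0\le r<q^\ell$ is the concatenation of the digits of $j$ followed by the length-$\ell$ padded digits of $r$; reading most significant first, the automaton reaches the state $\sigma_j:=\delta(q_0,(j)_q)$ after the digits of $j$ and only then reads $r$. Consequently the whole block
\[ B_j:=\bigl(s_{jq^\ell},s_{jq^\ell+1},\dots,s_{jq^\ell+q^\ell-1}\bigr) \]
is a function of the single state $\sigma_j$ alone. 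Since $\sigma_j$ ranges over the finite set $Q$, a pigeonhole argument over the $|Q|+1$ indices $j\in\{1,\dots,|Q|+1\}$ yields $j_1<j_2$ with $\sigma_{j_1}=\sigma_{j_2}$, whence $B_{j_1}=B_{j_2}$: the factor of length $L=q^\ell$ starting at $j_1q^\ell$ reappears verbatim at $j_2q^\ell$.

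It then remains to optimize the scale. The span of the repetition is $(j_2+1)q^\ell\le (|Q|+2)q^\ell$, so choosing $\ell$ maximal subject to $(|Q|+2)q^\ell\le N$ keeps the whole configuration inside $[0,N)$ while forcing $q^\ell> N/\bigl(q(|Q|+2)\bigr)$. Combined with the reduction of the first paragraph this gives $C_2(\mathcal S,N)\ge q^\ell\gg N$ with a constant depending only on $q$ and $|Q|$, as claimed. The main point to get right—and the only real subtlety—is the dependence of the block $B_j$ on a finite amount of information: this is transparent for a most-significant-first DFAO as above (one should also invoke the standard leading-zero invariance of $\tau\circ\delta$), whereas for a least-significant-first automaton the block depends on $j$ through the induced state transformation $\sigma\mapsto\delta(\sigma,(j)_q)$, so one instead pigeonholes over these finitely many (at most $|Q|^{|Q|}$) transformations, obtaining the same conclusion with a weaker but still admissible constant.
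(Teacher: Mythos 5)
Your argument is correct and complete. Note first that the paper itself offers no proof of \Cref{corr2auto}: it is quoted from the cited reference, so there is nothing internal to compare against; your write-up is essentially the standard argument for this result (and, as far as I can tell, the same mechanism as in the cited source): reduce the lower bound on $C_2$ to exhibiting a long repeated factor, and produce that repetition by pigeonholing on the automaton state reached after the most significant digits. The reduction step is sound --- with $d_1=j_1q^{\ell}$, $d_2=j_2q^{\ell}$, $M=q^{\ell}$ every summand in $V(\mathcal S_N,M,D)$ equals $1$, and your choice of $\ell$ keeps $M+d_2\le N$ while giving $q^{\ell}>N/(q(|Q|+2))$. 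Two small points you already handle but that are worth making explicit if this were written out in full: the identity $(jq^{\ell}+r)_q=(j)_q$ followed by the $\ell$ padded digits of $r$ requires $j\ge 1$, which your index set $\{1,\dots,|Q|+1\}$ guarantees (so leading-zero invariance is not actually needed in the msd-first case); and for the lsd-first convention the pigeonhole must indeed be taken over the induced maps in $Q^Q$ rather than over states, which only worsens the constant to something still depending on $q$ and $|Q|$ alone, as the statement permits.
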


Notice that the correlation of order $2$ of the Rudin--Shapiro sequence was already studied in~\cite{MS1998}, where the authors proved \[C_2(\mathcal{R},N) \geq \frac{1}{6}N, \] which can be compared to \Cref{corr2auto}. Nevertheless, the Rudin--Shapiro sequence is often cited for having a pseudorandom behavior regarding the correlation of order $2$ over small ranges. Indeed, Mauduit and S\'{a}rk\"{o}zy~\cite{MS1998} proved the following theorem. 

\begin{theorem}[\cite{MS1998}]
Let $d,N$ be positive integers and suppose that $d=o(N)$. Then \[ \sum_{n<N} r_nr_{n+d}=o(N). \]
\end{theorem}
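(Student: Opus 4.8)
The plan is to exploit the two digit recursions governing $\mathcal R$, namely $r_{2n}=r_n$ and $r_{2n+1}=(-1)^n r_n$, to fold the correlation sum onto itself with the shift roughly halved at each stage. To keep the recursion closed I would not work with $\sum_{n<N} r_n r_{n+d}$ alone but with the two-parameter family of twisted correlation sums
\[ T_\varepsilon(M,d)=\sum_{0\le n<M}(-1)^{\varepsilon n}\,r_n r_{n+d},\qquad \varepsilon\in\{0,1\}, \]
so that the target sum is $T_0(N,d)$. Splitting the summation variable according to $n=2m$ or $n=2m+1$ and inserting the recursions gives, for even $d=2d'$,
\[ T_\varepsilon(M,2d')=\bigl(1+(-1)^{\varepsilon+d'}\bigr)\,T_0(\lceil M/2\rceil,d')+O(1), \]
and for odd $d=2d'+1$,
\[ T_\varepsilon(M,2d'+1)=(-1)^{d'}T_1(\lceil M/2\rceil,d')+(-1)^{\varepsilon}T_1(\lceil M/2\rceil,d'+1)+O(1). \]
The decisive structural observation is that the shifts appearing after $k$ steps all lie in the interval $[\lfloor d/2^k\rfloor,\lceil d/2^k\rceil]$, which contains at most two consecutive integers; hence at every level only a bounded number of sums are in play, and the recursion is genuinely finite-dimensional.

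Next I would iterate this exactly $k^{*}=\lceil\log_2 d\rceil$ times, so that every surviving term has the common range $M\asymp N/2^{k^{*}}\asymp N/d$ and shift in $\{0,1\}$. Expanding the iteration writes $T_0(N,d)$ as a linear combination $\sum_{\varepsilon,s}A_{\varepsilon,s}\,T_\varepsilon(M,s)$ over the four base types $(\varepsilon,s)\in\{0,1\}^2$, where each net coefficient $A_{\varepsilon,s}$ is an entry of a product of $k^{*}$ transfer matrices read off from the binary digits of $d$, plus an aggregate boundary error of size $O(d)$ coming from the $O(1)$ corrections at odd intermediate ranges. The base values are easy: $T_1(M,0)=\sum_{m<M}(-1)^m=O(1)$ and $T_0(M,1),T_1(M,1)=O(\log M)$ by a further self-contained sub-recursion, while the only large base value is the diagonal $T_0(M,0)=M\asymp N/d$. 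The crucial point, which I would verify by tracing how the shift $0$ is produced, is that $T_0(M,0)$ never actually occurs: the shift $0$ can only be created by an odd step (from shift $1$), and an odd step always outputs the twisted sums $T_1$, so $A_{0,0}=0$. Consequently every base term that genuinely appears contributes only $O(\log N)$.

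It then remains to bound the coefficients $A_{\varepsilon,s}$, and this is where the square-root property enters and where the main difficulty lies. The naive estimate — counting branches and multiplying by the per-step factors — only gives $|A_{\varepsilon,s}|\le 2^{k^{*}}\ll d$, hence the useless bound $O(d\log N)$, which is not $o(N)$ for, say, $d\asymp N/\sqrt{\log N}$. To do better one must use the signed cancellation between the two children produced at each odd step: the transfer matrices are, up to signs and the layout of the $\{0,1\}$-shift window, exactly the Rudin--Shapiro (Hadamard-type) matrices, and the energy-doubling identity underlying the square-root property yields the uniform bound $|A_{\varepsilon,s}|\ll 2^{k^{*}/2}\asymp\sqrt d$, valid over all binary digit patterns of $d$. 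Granting this, I obtain
\[ \Bigl|\sum_{n<N}r_n r_{n+d}\Bigr|\ll \sqrt d\,\log N + d, \]
and this single estimate is $o(N)$ whenever $d=o(N)$, uniformly, with no separate treatment of small $d$ needed. I expect the genuine obstacle to be precisely this square-root bound on the transfer-matrix products — extracting true cancellation rather than the trivial triangle-inequality count, i.e.\ the same phenomenon that produces the square-root property — together with the careful bookkeeping of the boundary terms for odd intermediate ranges and for non-dyadic $N$.
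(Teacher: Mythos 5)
The paper does not prove this statement; it is quoted verbatim from Mauduit and S\'{a}rk\"{o}zy \cite{MS1998}, so there is no internal proof to compare against and your argument has to stand on its own. Your recursion is set up correctly: the identities $r_{2n}=r_n$, $r_{2n+1}=(-1)^nr_n$ give exactly the two transfer formulas you state, the shift window does stay of size at most two, the base values $T_1(M,0)=O(1)$ and $T_\varepsilon(M,1)=O(\log M)$ check out, the aggregate boundary error is indeed $O(d)$, and your observation that $A_{0,0}=0$ (shift $0$ is only ever created through the twisted sums, and $T_1(\cdot,0)$ feeds the coefficient $1+(-1)^{1+0}=0$ into $T_0(\cdot,0)$) is correct and is the one structural fact that matters.

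The gap, as written, is that your final bound is conditional on the transfer-matrix estimate $|A_{\varepsilon,s}|\ll 2^{k^*/2}$, which you do not prove and which is far from obvious: at every step where $\lfloor d/2^k\rfloor\equiv 0\pmod 4$ the untwisted coefficient is literally doubled with no companion term to cancel against, so any square-root saving would have to come from a global argument over the digit pattern of $d$, not from a per-step Hadamard identity. But this estimate is not needed, and your diagnosis of the ``genuine obstacle'' is a miscalculation of your own trivial bound. The $\ell^1$ norm of the coefficient vector at most doubles per step, so $|A_{\varepsilon,s}|\le 2^{k^*}\ll d$; since $A_{0,0}=0$, every surviving base term is a sum of length $M\asymp N/d$ and hence of size $O(\log(2N/d))$ --- not $O(\log N)$. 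This yields
\begin{equation*}
\Bigl|\sum_{n<N}r_nr_{n+d}\Bigr|\ \ll\ d\log(2N/d)+d,
\end{equation*}
and writing $d=N/\omega$ with $\omega\to\infty$ gives $d\log(2N/d)=N\log(2\omega)/\omega=o(N)$. So the triangle-inequality count you dismissed already proves the theorem in the full range $d=o(N)$ (your counterexample $d\asymp N/\sqrt{\log N}$ gives $d\log(2N/d)\asymp N\log\log N/\sqrt{\log N}=o(N)$, not $O(d\log N)$). Replace the conditional square-root step by this two-line computation and your proof is complete.
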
 

This behavior is radically different for larger orders of correlations, such as order $4$, even for a small range, see~\cite{MS1998}. For instance, we have the following result for consecutive terms. 

\begin{theorem}[\cite{MS1998}]
Let $M$ be a positive integer. Then \[ \sum_{n<2^M} r_nr_{n+1}r_{n+2}r_{n+3}=2^{M-1}. \]
\end{theorem}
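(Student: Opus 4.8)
The plan is to exploit the dyadic self-similarity of $\mathcal{R}$ through the two recurrences $r_{2n}=r_n$ and $r_{2n+1}=(-1)^n r_n$, both of which read off directly from the block-counting definition: appending a $\tO$ to $(n)_2$ creates no new occurrence of $\tL\tL$, whereas appending a $\tL$ creates one precisely when $(n)_2$ already ends in $\tL$, i.e.\ when $n$ is odd. Setting $S_M=\sum_{0\le n<2^M} r_n r_{n+1} r_{n+2} r_{n+3}$, I would split the sum according to the parity of $n$ and feed these recurrences into each of the four factors, turning a product of four consecutive values of $\mathcal{R}$ into an expression in the values of $\mathcal{R}$ at the roughly half-as-large indices.

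For even $n=2m$ the four factors are $r_m$, $(-1)^m r_m$, $r_{m+1}$, $(-1)^{m+1}r_{m+1}$; since $r_m^2=r_{m+1}^2=1$, everything collapses and the product equals $(-1)^{2m+1}=-1$ for every $m$. The $2^{M-1}$ even indices in $[0,2^M)$ therefore each contribute the same constant, and this is exactly the source of the main term $2^{M-1}$. For odd $n=2m+1$ the factors become $(-1)^m r_m$, $r_{m+1}$, $(-1)^{m+1}r_{m+1}$, $r_{m+2}$, and after cancelling $r_{m+1}^2$ and collecting the signs the product reduces to $-r_m r_{m+2}$. Hence the odd indices contribute $-\sum_{0\le m<2^{M-1}} r_m r_{m+2}$, which is a correlation of order $2$ of $\mathcal{R}$ with shift $2$ taken over a complete dyadic block.

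It then remains to show that this order-$2$ sum vanishes, which I would establish by a second parity split, now in $m$: the same recurrences give $r_{2j}r_{2j+2}=r_j r_{j+1}$ but $r_{2j+1}r_{2j+3}=(-1)^j r_j\,(-1)^{j+1}r_{j+1}=-r_j r_{j+1}$, so each consecutive pair $(m=2j,\,m=2j+1)$ cancels exactly; for $M\ge 2$ every term is paired, so $\sum_{0\le m<2^{M-1}} r_m r_{m+2}=0$. Combining the two contributions leaves $S_M=\pm 2^{M-1}$, with the constant and sign pinned down by the even-index computation of the previous step. The calculation is short once the recurrences are in hand; the one point that needs care — and thus the main thing to get right — is the index bookkeeping, since the factor $r_{m+2}$ in the odd case reaches one step beyond the dyadic block, and one must check that the pairwise cancellation in the order-$2$ sum is genuinely complete with no leftover boundary term, and treat the degenerate base case $M=1$ (where the pairing is incomplete) separately.
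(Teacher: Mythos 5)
Your two recurrences $r_{2n}=r_n$ and $r_{2n+1}=(-1)^n r_n$ are correct, and so are all three of your reductions: the product at even $n=2m$ equals $(-1)^{2m+1}=-1$ identically, the product at odd $n=2m+1$ equals $-r_m r_{m+2}$, and $\sum_{0\le m<2^{M-1}} r_m r_{m+2}=0$ for $M\ge 2$ via the pairing of $m=2j$ with $m=2j+1$ (the pairing really is complete, and the overhang in $r_{m+2}$ causes no trouble since the recurrences hold for all indices). Note that the paper contains no proof of this statement --- it is quoted from Mauduit and S\'ark\"ozy --- so there is nothing internal to compare against; your dyadic self-similarity argument is the natural one and is essentially theirs.

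The one substantive problem is the sign, which you cannot leave to be ``pinned down'' at the end: your even-index computation gives $-1$ per term, not $+1$, so your argument proves $\sum_{n<2^M} r_n r_{n+1} r_{n+2} r_{n+3} = -2^{M-1}$ for $M\ge 2$, the \emph{negative} of the displayed identity, while you simultaneously assert that the even indices are ``the source of the main term $2^{M-1}$.'' This is not a slip in your algebra: with the paper's own conventions ($r_0=r_1=r_2=1$, $r_3=-1$, $r_4=r_5=1$, $r_6=-1$, \dots) the $M=2$ sum is $-1-1-1+1=-2$, so the identity as transcribed here is off by a sign (presumably an indexing or sign-convention mismatch with the source), and a correct write-up must say so explicitly rather than waving at ``$\pm$.'' Your caution about $M=1$ is also warranted and should not be dismissed as a formality: there the order-$2$ sum consists of the single unpaired term $r_0 r_2 = 1$, giving a total of $-2$, so the stated formula fails at $M=1$ even up to sign, although the theorem as written claims to cover every positive integer $M$.
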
 

Therefore, these considerations on the correlations imply, once more, that the Rudin--Shapiro sequence is far from being a pseudorandom sequence. 

\subsection{Purpose of this paper}

The present paper is in the continuation of investigating the pseudorandomness of automatic sequences along subsequences. We refer to the survey~\cite{MW2022} for more details of pseudorandomness arising from automatic sequences. For instance, there is a large amount of literature on the subsequences of the famous Thue--Morse sequence. In particular, Drmota, Mauduit, and Rivat~\cite{DMR2019} have proved that the Thue--Morse sequence along squares is a normal sequence, that is, any fixed pattern of length $k$ appears with the same uniform frequency $1/2^k$, which is the expected behavior for truly random binary sequences. Moreover, M\"{u}llner~\cite{Mullner2018} extends this result to many automatic sequences, including the Rudin--Shapiro sequence.

\smallskip

The mentioned articles deal with polynomial subsequences, and in this paper, we investigate the pseudorandomness of this kind of sequence along the subsequence of prime numbers. Since there are more prime numbers than perfect squares, this problem seems easier in appearance. However, we do not have a simple explicit formula to express all the primes numbers, leading to different difficulties. For example, proving that the Thue--Morse sequence is normal along primes is still an open problem.  

\smallskip 

The main goal of this paper is to study sums of the form \begin{align*}
&\mathcal{S}_{k}(N)=\sum \limits_{p\leq N} r_p r_{p+1}\cdots r_{p+k}=\sum \limits_{p\leq N} (-1)^{r_{\tL\tL}(p)+\cdots+r_{\tL\tL}(p+k)},\\
&\mathcal{U}_{k}(N)=\sum \limits_{p\leq N} r_p r_{p+k}=\sum \limits_{p\leq N} (-1)^{r_{\tL\tL}(p)+r_{\tL\tL}(p+k)},
\end{align*} where $k\geq 1$ is an integer. The method used by Mauduit and S\'{a}rk\"{o}zy~\cite{MS1998} for this type of sums is not compatible with sums over primes, and we shall use exponential sums instead. Thereby we define the exponential sum $S_{\alpha}(N)$ by \begin{align} \label{def_exp_sum}
S_{\alpha}(N)=\sum \limits_{p\leq N} e\left(\alpha_0 r_{\tL \tL}(p) + \cdots + \alpha_k r_{\tL \tL}(p+k) \right)
\end{align} where $k\geq 1$ and $\alpha=(\alpha_0,\ldots,\alpha_k) \in \mathbb{R}^{k+1}$. Then we recover the sums $\mathcal{S}_{k}(N)$ and $\mathcal{U}_{k}(N)$ for specific vectors $\alpha$ with
\begin{align*}
&\mathcal{S}_{k}(N)=\mathcal{S}_{\alpha}(N), \text{ for } \alpha=(1/2,\ldots,1/2), \\
&\mathcal{U}_{k}(N)=\mathcal{S}_{\alpha}(N), \text{ for } \alpha=(1/2,0,\ldots,0,1/2).
\end{align*} The motivation of this work also comes from the study of the sum \[\sum \limits_{p\leq N} (-1)^{s_2(p)+\cdots+s_2(p+k)}, \quad \sum \limits_{p\leq N} (-1)^{s_2(p)+s_2(p+k)}\] from Aloui, Mauduit and Mkaouar~\cite{AMM2021}, where $s_2$ is the sum of digits function in base $2$. These authors have proved the following result. 

\begin{theorem}[\cite{AMM2021}]\label{sum of digits correlation}
For any nonnegative integer $k$, we have \begin{align*}
\lim \limits_{N\rightarrow +\infty} \frac{1}{\pi(N)}\sum \limits_{p\leq N} (-1)^{s_2(p)+\cdots+s_2(p+k)} =\lim \limits_{N\rightarrow +\infty} \frac{1}{N}\sum \limits_{ \substack{ n\leq N \\ n \text{odd}}} (-1)^{s_2(n)+\cdots+s_2(n+k)},
\end{align*}
and \begin{align*}
\lim \limits_{N\rightarrow +\infty} \frac{1}{\pi(N)}\sum \limits_{p\leq N} (-1)^{s_2(p)+s_2(p+k)} =\lim \limits_{N\rightarrow +\infty} \frac{1}{N}\sum \limits_{ \substack{ n\leq N \\ n \text{ odd}}} (-1)^{s_2(n)+s_2(n+k)}.
\end{align*}
\end{theorem}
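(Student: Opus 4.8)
The plan is to linearise the correlation via the carry identity for the sum-of-digits function, separating a ``global'' Thue--Morse-type factor from a ``local'' factor governed by $2$-adic valuations. Writing $f(n)=(-1)^{s_2(n)+\cdots+s_2(n+k)}=e\!\left(\tfrac12\sum_{j=0}^k s_2(n+j)\right)$, I would first record that adding $1$ replaces the trailing ones of an integer by zeros, so that $s_2(n+j)=s_2(n)+j-\sum_{i=1}^{j}v_2(n+i)$. Summing over $0\le j\le k$ with weights $\alpha_j=1/2$ and swapping the order of summation yields
\[
\sum_{j=0}^k \tfrac12\,s_2(n+j)=\tilde\alpha_0\,s_2(n)+\tilde\alpha-\sum_{i=1}^{k}\tilde\alpha_i\,v_2(n+i),
\]
in the notation introduced above, with $\tilde\alpha_0=(k+1)/2$ and $\tilde\alpha_i=(k-i+1)/2$. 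Hence $f(n)=e(\tilde\alpha)\,e(\tilde\alpha_0 s_2(n))\,\prod_{i=1}^k e(-\tilde\alpha_i v_2(n+i))$, and the whole problem splits according to whether $\tilde\alpha_0\in\mathbb Z$, i.e.\ whether $k+1$ is even or odd.

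The second step is to truncate the local factor. For a parameter $r$ the product $\prod_{i=1}^k e(-\tilde\alpha_i v_2(n+i))$ is determined by $n \bmod 2^{r}$ except on the set where some $v_2(n+i)\ge r$, a set of relative density $O_k(2^{-r})$ among both primes and odd integers (by the prime number theorem in arithmetic progressions and by a direct count, respectively). Replacing the product by its $2^{r}$-periodic truncation $g_r(n)$, supported on odd residues, I would reduce $\sum_{p\le N}f(p)$ to $e(\tilde\alpha)\sum_{a}g_r(a)\sum_{\substack{p\le N\\ p\equiv a}} e(\tilde\alpha_0 s_2(p))$ up to an error $O_k(2^{-r}\pi(N))$, and analogously for the odd-integer sum with error $O_k(2^{-r}N)$.

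When $k+1$ is even we have $\tilde\alpha_0\in\mathbb{Z}$, whence $e(\tilde\alpha_0 s_2(p))=1$ and the inner sum collapses to $\pi(N;2^r,a)$. Each admissible (odd) class carries $\pi(N)/\varphi(2^r)=\pi(N)/2^{r-1}$ primes, while each odd class contains $\sim N/2^{r}$ integers out of the $\sim N/2$ odd integers up to $N$; consequently the prime average (normalised by $\pi(N)$) and the odd-integer average (normalised by their number $\sim N/2$) both tend, as $N\to\infty$ and then $r\to\infty$, to $\lim_r 2^{-(r-1)}\sum_{a}g_r(a)$. This is exactly the mechanism by which primes imitate odd integers: equidistribution in residue classes modulo $2^r$ with identical relative frequencies. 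When $k+1$ is odd, instead $\tilde\alpha_0$ is a half-integer and $e(\tilde\alpha_0 s_2(p))=(-1)^{s_2(p)}$; splitting $s_2(p)=s_2(a)+s_2(\lfloor p/2^r\rfloor)$ for $p\equiv a\pmod{2^r}$ turns the inner sum into $(-1)^{s_2(a)}\sum_{p\equiv a}(-1)^{s_2(\lfloor p/2^r\rfloor)}$, and the claim becomes that this is $o(\pi(N))$ for each $a$, so that the prime average tends to $0$, as does the odd-integer average by the analogous elementary estimate.

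This last estimate is the crux, and is supplied by the Mauduit--Rivat resolution of the Gelfond problem: one needs $\sum_{n\le N}\Lambda(n)(-1)^{s_2(n)}=o(N)$, uniformly over residue classes to a fixed power of $2$. The standard route is Vaughan's identity, reducing the von~Mangoldt-weighted sum to type~I sums $\sum_{m\le M}\sum_{n}(-1)^{s_2(mn)}$ with $m$ short and to type~II bilinear sums $\sum_m\sum_n a_m b_n (-1)^{s_2(mn)}$; the type~II sums are then controlled by the van der Corput inequality after exploiting the carry-based block structure of $s_2$ to detach the high-order digits of $mn$ and extract genuine cancellation. I expect this analytic core---present precisely when $\tilde\alpha_0\notin\mathbb{Z}$, i.e.\ for even $k$ in the consecutive case $\mathcal S_k$---to be the main obstacle, the complementary case being elementary equidistribution in progressions. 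Finally, the pair correlation $\mathcal U_k$ corresponds to $\alpha=(1/2,0,\dots,0,1/2)$, for which $\tilde\alpha_0=1\in\mathbb{Z}$ for every $k$; that statement therefore falls entirely under the counting argument and requires no Gelfond-type input.
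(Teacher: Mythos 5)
This statement is quoted from \cite{AMM2021} and is not reproved in the present paper, but your argument is essentially the argument of that source, and it is also the exact template this paper follows for $r_{\tL\tL}$: the carry identity $s_2(n+1)=s_2(n)+1-v_2(n+1)$ (the analogue of \Cref{Legendre r11} without the $\Delta$ term), conditioning on the valuations $v_2(n+i)$ and detecting the resulting congruence classes, the Mauduit--Rivat exponential-sum bound when $\tilde{\alpha_0}\notin\mathbb Z$, and the prime number theorem in progressions modulo powers of $2$ when $\tilde{\alpha_0}\in\mathbb Z$. The sketch is sound: the exceptional set where some $v_2(n+i)\ge r$ has relative density $O_k(2^{-r})$ in both populations, the inner prime sums in the half-integral case are $o(\pi(N))$ class by class (the additive twist $e(\beta n)$ in the Mauduit--Rivat theorem is precisely what detects the class modulo $2^r$), the elementary bound $\sum_{m\le x}(-1)^{s_2(m)}=O(1)$ disposes of the odd-integer side there, and $\tilde{\alpha_0}=1$ for $\alpha=(1/2,0,\dots,0,1/2)$ indeed places the second identity entirely in the elementary case. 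One point deserves emphasis: you normalise the odd-integer average by the number of odd integers up to $N$, which is $\sim N/2$, whereas the statement as transcribed above normalises by $N$; these differ by a factor $2$ and the difference is not cosmetic. For $k=1$ one has $(-1)^{s_2(n)+s_2(n+1)}=-(-1)^{v_2(n+1)}$; the event $v_2(\,\cdot\,+1)=u$ has density $2^{-u}$ among primes but $2^{-(u+1)}$ among all integers, so the prime average tends to $\sum_{u\ge1}(-1)^{u+1}2^{-u}=1/3$ while $\frac1N\sum_{n\le N,\ n\ \mathrm{odd}}$ tends to $1/6$. Your normalisation is the one for which the identity (and your proof) actually holds; as printed here the right-hand sides should carry a factor $2/N$.
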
 

In this paper, we extend \Cref{sum of digits correlation} to the block digital function $r_{\tL\tL}$ and prove that the corresponding sums are equal to $0$ in the limit. This study is more challenging, and some of the methods used in~\cite{AMM2021} need to be specifically adapted to this case. 

\smallskip

This paper is structured as follows. In \Cref{section_result}, we give the main results of this paper. Some preliminaries are given in \Cref{Preliminaries}, and Sections \ref{section_thm1} and \ref{section_thm2} are dedicated to the proof of the main results of this paper.  

\section{Results}\label{section_result}

The main theorem is the following one. 

\begin{theorem} \label{main_thm}
We have for any $k\geq 1$ \begin{align} \label{main_thm1}
&\lim \limits_{N\rightarrow +\infty} \frac{1}{\pi(N)}\sum \limits_{p\leq N} (-1)^{r_{\tL\tL}(p)+\cdots+r_{\tL\tL}(p+k)}=0. \\  \label{main_thm2}
&\lim \limits_{N\rightarrow +\infty} \frac{1}{\pi(N)}\sum \limits_{p\leq N} (-1)^{r_{\tL\tL}(p)+r_{\tL\tL}(p+k)}=0. 
\end{align}
\end{theorem}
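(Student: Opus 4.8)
The plan is to derive both limits in \Cref{main_thm} from a single cancellation estimate for the exponential sum $S_\alpha(N)$ of \eqref{def_exp_sum}. Since $(-1)^m = e(m/2)$, the two sums in \Cref{main_thm} are exactly $S_\alpha(N)$ evaluated at $\alpha = (1/2,\ldots,1/2)$ and at $\alpha = (1/2,0,\ldots,0,1/2)$, so it suffices to prove $S_\alpha(N) = o(\pi(N))$ for these two vectors. I would in fact isolate a non-degeneracy condition on $\alpha$, phrased through the quantities $\tilde\alpha$ and $\tilde\alpha_i$ introduced above, under which $S_\alpha(N) = o(\pi(N))$, and then check that both vectors of interest satisfy it. To access the primes I would pass to the von Mangoldt--weighted sum $\sum_{n\le N}\Lambda(n)\,e(\phi_\alpha(n))$, where $\phi_\alpha(n) = \alpha_0 r_{\tL\tL}(n)+\cdots+\alpha_k r_{\tL\tL}(n+k)$, and recover $S_\alpha(N)$ from it by partial summation after discarding the negligible contribution of proper prime powers.

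The engine is Vaughan's identity, which splits $\sum_{n\le N}\Lambda(n)e(\phi_\alpha(n))$ into type~I (linear) and type~II (bilinear) sums. The arithmetic-free input these require is a pair of estimates for the block-counting function $r_{\tL\tL}$, adapting the scheme of~\cite{AMM2021}: first, a carry-propagation lemma showing that, outside a sparse set of $n$, the phase $\phi_\alpha(n)$ is governed by a bounded number of trailing binary digits of $n$ together with a slowly varying high part; and second, a Fourier estimate of the form $\sup_\theta\bigl|\sum_{n<2^\lambda} e(\phi_\alpha(n)+n\theta)\bigr| \ll 2^{\lambda(1-\eta)}$ for some $\eta=\eta(\alpha)>0$ valid in the non-degenerate regime. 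The key difference with the sum-of-digits case is that $r_{\tL\tL}$ counts the block $\tL\tL$ rather than single digits, so the phase does not factor digit by digit; instead I would track the state of the Rudin--Shapiro automaton simultaneously for all $k+1$ shifts $n, n+1, \ldots, n+k$, which recasts the Fourier bound as a spectral-gap statement for a finite family of $(k+1)$-fold transfer matrices indexed by the possible carries.

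Feeding these two lemmas into the type~I sums directly, and into the type~II sums after Cauchy--Schwarz and a van der Corput shift (which reduces matters to controlling $\sum_n e(\phi_\alpha(mn)-\phi_\alpha(m'n))$), yields $\sum_{n\le N}\Lambda(n)e(\phi_\alpha(n)) = o(N)$ for non-degenerate $\alpha$, hence $S_\alpha(N)=o(\pi(N))$. This already gives both limits in \Cref{main_thm}, and it simultaneously refines \Cref{sum of digits correlation}: the same truncation identifies the surviving main term with the average of $e(\phi_\alpha(n))$ over odd integers (via equidistribution of primes in the reduced residue classes modulo $2^\lambda$, all of which are odd), and for $r_{\tL\tL}$ and these two vectors that average is $0$. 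This is where the block structure, unlike the ordinary sum of digits, forces the limit to vanish rather than to an arbitrary automatic frequency, so that the prime average equals the odd-integer average and both equal zero.

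The main obstacle I expect is the Fourier/spectral-gap estimate of the second input. For the ordinary sum of digits the phase factors over digits and the bound is elementary, whereas for $r_{\tL\tL}$ the coupling between consecutive digits, compounded across the $k+1$ shifts, forces a genuine transfer-operator argument; one must exhibit a uniform contraction of the associated matrix products away from the degenerate directions of $\alpha$, and verify that $(1/2,\ldots,1/2)$ and $(1/2,0,\ldots,0,1/2)$ indeed avoid those directions. Controlling the carries uniformly in the correlation length $k$, the point at which the quantities $\tilde\alpha$ and $\tilde\alpha_i$ naturally enter the estimates, is the most delicate part of the argument.
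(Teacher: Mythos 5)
There is a genuine gap, and it sits exactly where the real difficulty of the theorem lies. Your whole mechanism rests on a ``non-degeneracy condition'' on $\alpha$ under which a Fourier/spectral-gap estimate gives $S_\alpha(N)=o(\pi(N))$, and you propose to ``verify that $(1/2,\ldots,1/2)$ and $(1/2,0,\ldots,0,1/2)$ avoid the degenerate directions.'' They do not. The relevant quantity is $\tilde{\alpha_0}=\alpha_0+\cdots+\alpha_k$: after the carry identity $r_{\tL\tL}(n+i)=r_{\tL\tL}(n)+i-\sum_{j\le i}v_2(n+j)+\Delta(n,i)$, the phase $\phi_\alpha(n)$ collapses to $\tilde{\alpha_0}\,r_{\tL\tL}(n)$ plus terms depending only on $2$-adic valuations and the bounded corrections $\Delta(n,i)$. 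For $\alpha=(1/2,\ldots,1/2)$ with $k$ odd one has $\tilde{\alpha_0}=(k+1)/2\in\mathbb{Z}$, and for $\alpha=(1/2,0,\ldots,0,1/2)$ one has $\tilde{\alpha_0}=1$ for every $k$. In these cases $e(\tilde{\alpha_0}r_{\tL\tL}(n))\equiv 1$, the block-counting function disappears entirely from the phase, and no estimate of the type $\sup_\theta|\sum_{n<2^\lambda}e(\phi_\alpha(n)+n\theta)|\ll 2^{\lambda(1-\eta)}$ can hold: the summand is essentially a function of residue classes of $n$ modulo small powers of $2$. Vaughan's identity plus a transfer-operator contraction therefore cannot produce the result for $k$ odd in \eqref{main_thm1}, nor for any $k$ in \eqref{main_thm2}. (For $k$ even in \eqref{main_thm1}, where $\lVert\tilde{\alpha_0}\rVert=1/2$, your plan does work, but there the paper simply reduces to the known bound $\sum_{n\le N}\Lambda(n)e(\alpha r_{\tL\tL}(n)+\beta n)\ll(\log N)^{11/4}N^{1-c\lVert\alpha\rVert^2}$ of Mauduit--Rivat rather than re-deriving it; that is \Cref{First thm}.)

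You do partially anticipate this by saying the surviving main term is the average of $e(\phi_\alpha(n))$ over odd integers and asserting that ``for $r_{\tL\tL}$ and these two vectors that average is $0$.'' But that assertion is precisely the content that needs proof, and it is where all the work of the paper's Section 5 goes: one must classify the possible values of $(\Delta(p,1),\ldots,\Delta(p,k))$ via the congruence systems $p+i\equiv 2^{u_i}+\varepsilon_i2^{u_i+1}\pmod{2^{u_i+2}}$, use \Cref{lemma 4.1} to see that exactly one valuation $u_i$ (at an odd index $i$) is free while the others are forced to equal $v_2(|i-j|)$, pair each configuration $\Delta$ with the configuration $\Delta'$ obtained by flipping $\varepsilon_i$, and check that $e(\alpha\cdot\Delta)+e(\alpha\cdot\Delta')=0$ --- a computation that uses the parity of $k$ and of $i$ in an essential way, together with \Cref{CRT} and \Cref{PNTAP} to equate the prime counts attached to $\Delta$ and $\Delta'$. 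Nothing in your outline supplies this pairing or explains why the block structure of $r_{\tL\tL}$ (as opposed to $s_2$, where the analogous limit is a nonzero automatic frequency) forces the average to vanish. As written, the proposal proves the easy half of the theorem by heavier machinery than needed and leaves the hard half as an unproved claim.
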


This theorem implies that correlations of order $k$ of the Rudin--Shapiro sequence are smaller along primes and closer to random behavior. Therefore, this new result proves that this sequence is far more random than the original sequence along the sequence of primes. 

\smallskip

The proof will be split into two parts, according to the parity of $k$. If $k$ is even, the following theorem is sufficient and provides an upper bound for the sum $\mathcal{S}_{\alpha}(N)$ for any $\alpha$. 

\begin{theorem}\label{First thm}
Let $k \in \mathbb{N}$,  $\alpha=(\alpha_0,\ldots,\alpha_k) \in \mathbb{R}^{k+1}$. Then for any integer $N$ large enough we have \begin{align*}
\mathcal{S}_{\alpha}(N)\ll (\log N)^KN^{1-\sigma}
\end{align*} where $\sigma=\min(1/2,c \lVert\tilde{\alpha_0}\rVert^2)$, the constant $K$ depends only on $k$ and the constant $c$ is independant from $\alpha$. 
\end{theorem}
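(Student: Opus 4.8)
The plan is to reduce the sum over primes to an exponential sum twisted by the von Mangoldt function and to attack the latter with Vinogradov's method via Vaughan's identity, following the strategy developed for digital functions along primes. Writing
\[
f(n)=e\!\left(\sum_{j=0}^{k}\alpha_j\, r_{\tL\tL}(n+j)\right),\qquad T(N)=\sum_{n\le N}\Lambda(n)f(n),
\]
partial summation together with the standard removal of proper prime powers (which contribute only $O(\sqrt{N}\log N)$) shows that it suffices to bound $T(N)$ by $(\log N)^{K'}N^{1-\sigma}$, losing at most a factor $\log N$. Vaughan's identity then decomposes $T(N)$ into $O(\log^2 N)$ sums of two shapes: \emph{Type I} sums $\sum_{d\le D}a_d\sum_{m}f(dm)$ with $|a_d|\ll\log N$, and \emph{Type II} (bilinear) sums $\sum_{M<m\le 2M}\sum_{n}a_mb_nf(mn)$ whose coefficients are bounded by a divisor function, where the ranges $D,M$ are controlled by free parameters $U,V$ to be chosen as small powers of $N$.

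Both types require understanding $f$ under multiplication by an auxiliary variable, and the structural heart is a carry-propagation (van der Corput type) lemma adapted to the block function. Since the blocks $\tL\tL$ counted by $r_{\tL\tL}$ involve only pairs of consecutive binary digits, one may truncate $r_{\tL\tL}$ to its first $\lambda$ digits, with an error supported on a thin set where a carry crosses the truncation window. After truncation $f$ depends only on $n\bmod 2^\lambda$ and a bounded carry state, so it admits a finite Fourier expansion $f(n)\approx\sum_{h}c_h\,e(hn/2^\lambda)$. The coefficients $c_h$ are computed through the \emph{weighted transfer matrix} of the Rudin--Shapiro automaton. The key point is that adding the fixed shifts $0,1,\ldots,k$ leaves the high-order digits unchanged outside a sparse set, so the high-order phase is governed by the aggregate weight $\tilde{\alpha_0}=\sum_{j=0}^{k}\alpha_j$; this produces the Gelfond-type decay $\sum_{h}|c_h|\ll\exp\!\bigl(-c\lambda\lVert\tilde{\alpha_0}\rVert^{2}\bigr)$ with $c$ independent of $\alpha$.

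With this input the two sum types are handled as follows. For Type I, the inner sum $\sum_m f(dm)$ is estimated directly from the Fourier expansion, the nonzero frequencies giving geometric-type cancellation and the decay factor $\exp(-c\lambda\lVert\tilde{\alpha_0}\rVert^{2})$ governing the main frequency, and the summation over $d\le D$ costs only a power of $\log N$. For Type II, I apply Cauchy--Schwarz in $m$ to remove $a_m$, square out, and exchange summation to reach
\[
\sum_{n,n'} b_n\overline{b_{n'}}\sum_{m}f(mn)\overline{f(mn')}.
\]
The carry lemma linearizes $r_{\tL\tL}(mn+j)-r_{\tL\tL}(mn'+j)$ in the top digits, and the transfer-matrix bound again supplies the decay $\exp(-c\lambda\lVert\tilde{\alpha_0}\rVert^{2})$ on the off-diagonal $n\ne n'$, while the diagonal is bounded trivially. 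Finally, choosing $2^\lambda$ a suitable power of $N$ and optimizing the Vaughan parameters $U,V$ converts the exponential decay into the power saving $N^{-\sigma}$ with $\sigma=\min(1/2,\,c\lVert\tilde{\alpha_0}\rVert^{2})$; the cap $1/2$ is forced by the square-root loss incurred in the Cauchy--Schwarz step.

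The main obstacle is precisely the Type II estimate specialized to the block function $r_{\tL\tL}$. Unlike the sum-of-digits function treated in~\cite{AMM2021}, where digits contribute independently and the Fourier coefficients factor into a clean product of cosines yielding the decay rate immediately, the block function couples consecutive digits, so the Fourier transform of the truncated function must be analyzed through the non-diagonal transfer matrix of the Rudin--Shapiro automaton. Establishing the uniform decay $\exp(-c\lambda\lVert\tilde{\alpha_0}\rVert^{2})$ with $c$ independent of $\alpha$ then demands a careful spectral estimate of long products of these weighted matrices, and doing so while simultaneously controlling the carry propagation across all $k+1$ shifted arguments — so that the single aggregate parameter $\tilde{\alpha_0}$ governs every term — is the technical crux of the argument.
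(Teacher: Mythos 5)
Your strategy---Vaughan's identity plus a truncation/carry analysis and a transfer-matrix Fourier bound for the multi-shift function---is in principle the Mauduit--Rivat programme, but as written it contains a genuine gap: everything hinges on the claimed uniform decay $\sum_h|c_h|\ll\exp\bigl(-c\lambda\lVert\tilde{\alpha_0}\rVert^2\bigr)$ for the Fourier coefficients of the truncated function $e\bigl(\sum_j\alpha_j r_{\tL\tL}(n+j)\bigr)$, together with a carry-propagation lemma valid simultaneously for all $k+1$ shifts inside the Type~II sums. You explicitly flag this as ``the technical crux'' but supply no argument for it, and it is not at all clear a priori that the decay rate should depend on $(\alpha_0,\ldots,\alpha_k)$ only through the aggregate $\tilde{\alpha_0}=\sum_j\alpha_j$ rather than through some more complicated functional of the weights; that is precisely the content of the theorem. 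Without that lemma the proof does not exist. A smaller inaccuracy: the cap $\sigma\le 1/2$ comes from the trivial $O(\sqrt N)$ contribution of proper prime powers when passing from $\Lambda$-weighted sums back to sums over primes, not from the square-root loss in Cauchy--Schwarz.

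The paper sidesteps the entire bilinear-sum machinery with an elementary reduction. \Cref{Legendre r11} gives $r_{\tL\tL}(n+1)=r_{\tL\tL}(n)+1-v_2(n+1)+r_{\tO\tL}(n)-r_{\tO\tL}(n+1)$, and iterating yields
\begin{align*}
\sum_{i=0}^k\alpha_i r_{\tL\tL}(n+i)=\tilde{\alpha}+\tilde{\alpha_0}\,r_{\tL\tL}(n)-\sum_{i=1}^k\tilde{\alpha_i}\,v_2(n+i)+\sum_{i=1}^k\alpha_i\,\Delta(n,i).
\end{align*}
The $v_2$ and $\Delta$ terms are pure congruence data, so splitting according to the values of the $v_2(n+j)$ and detecting the resulting congruences by additive characters (\Cref{lemma1} and \Cref{estimate1}) bounds the whole sum by $(\log N)^K\max_{r}\bigl|\sum_{n\le N}\Lambda(n)e(\tilde{\alpha_0}r_{\tL\tL}(n)+rn)\bigr|$, and this single-function exponential sum is exactly the estimate already proved by Mauduit and Rivat (\cite[Theorem 4]{MR2015}), quoted as a black box. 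This reduction also explains structurally why $\tilde{\alpha_0}$ is the governing parameter: it is the coefficient that survives in front of $r_{\tL\tL}(n)$ after telescoping. Your outline would instead have to reconstruct the Mauduit--Rivat analysis from scratch for the $(k+1)$-fold shifted function, which is precisely the work the paper's argument is designed to avoid.
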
 

It is clear that if $\tilde{\alpha_0} \in \mathbb{Z}$,  the previous bound is trivial, and we shall prove a result for this case to extend \Cref{sum of digits correlation} to the block digital function $r_{\tL\tL}$. This case will be done more precisely in \Cref{section_thm2}. 

\section{Preliminaries} \label{Preliminaries}

In this section, we discuss the main differences between the Rudin--Shapiro sequence and the sum of digits function. The following lemma is the proof's key piece since it relies on two consecutive values of the Rudin--Shapiro sequence. Such as the function $r_{\tL\tL}$, we denote $r_{\tO\tL}(n)$ the number of occurrences of the block $\tO\tL$ in the $2$-ary expansion of $n$, including the block with the most significant digit. 

\begin{lem}\label{Legendre r11}
For any integer $n$, we have \begin{align*}
r_{\tL \tL}(n+1)=r_{\tL \tL}(n)+1-v_2(n+1)+r_{\tO \tL}(n)-r_{\tO \tL}(n+1).
\end{align*}
\end{lem}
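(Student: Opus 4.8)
The plan is to bypass any direct bookkeeping of how the blocks $\tL\tL$ and $\tO\tL$ migrate under $n \mapsto n+1$, and instead reduce the claimed identity to two elementary and essentially classical facts. The first is a \emph{static} relation tying the two block-counting functions to the binary sum-of-digits function $s_2$:
\[
r_{\tL\tL}(m) + r_{\tO\tL}(m) = s_2(m), \qquad m \in \mathbb{N}.
\]
The second is the classical \emph{incremental} digit-sum formula
\[
s_2(n+1) = s_2(n) + 1 - v_2(n+1).
\]
Granting both, the lemma follows by pure substitution: writing $r_{\tL\tL}(m) = s_2(m) - r_{\tO\tL}(m)$ for $m = n$ and $m = n+1$ and plugging into the right-hand side of the asserted equality, the two occurrences of $r_{\tO\tL}(n)$ cancel and the statement collapses to exactly the incremental digit-sum formula.

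To prove the static identity, I would decompose the binary word $(m)_2$ into its maximal runs of consecutive $\tL$'s. A single run of length $L$ contributes exactly $L-1$ occurrences of $\tL\tL$, so summing over runs gives $r_{\tL\tL}(m) = s_2(m) - (\#\text{ runs of } \tL\text{'s})$, since the total number of $\tL$'s is $s_2(m)$. On the other hand, with the convention that $r_{\tO\tL}$ includes the block formed with the implicit leading $\tO$ placed in front of the most significant digit, each maximal run of $\tL$'s is opened by exactly one $\tO\tL$ transition; hence $r_{\tO\tL}(m)$ equals the number of such runs. Adding the two expressions yields the identity. The leading-block convention is precisely what makes the topmost run contribute its $\tO\tL$, and is therefore essential to the bookkeeping.

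For the incremental formula I would use the standard carry analysis: if $v = v_2(n+1)$ then $n$ ends in exactly $v$ trailing $\tL$'s, and adding $1$ turns these $v$ ones into zeros while promoting one $\tO$ into a $\tL$, for a net change of $1 - v$ in the digit sum. The only delicate point is the boundary case in which the carry propagates past the most significant digit, i.e. $n = \tL\cdots\tL$ and $n+1 = 2^{\,\ell+1}$; here one checks directly that both the static identity and the incremental formula remain valid (with $s_2(n+1)=1$ and $v_2(n+1) = \ell+1$), so the reduction goes through unchanged. I do not anticipate any serious obstacle here: the whole content of the lemma is the recognition of the static identity $r_{\tL\tL} + r_{\tO\tL} = s_2$, after which the proof is a one-line cancellation against a classical fact.
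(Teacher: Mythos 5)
Your proof is correct, but it follows a genuinely different route from the one the paper writes out --- in fact it is precisely the alternative that the paper only mentions in passing in the remark after the lemma (namely, that one could instead use $s_2(n)=r_{\tL\tL}(n)+r_{\tO\tL}(n)$). The paper's own proof expands $n-r_{\tL\tL}(n)=\sum_i\varepsilon_i(2^i-\varepsilon_{i+1})=\sum_i\varepsilon_i(2^i-1)+\sum_{\varepsilon_{i+1}=0}\varepsilon_i$ and recognizes the first sum as $v_2(n!)$ by Legendre's formula, obtaining the identity $n=r_{\tL\tL}(n)+v_2(n!)+r_{\tO\tL}(n)$; subtracting this at $n$ and at $n+1$ and using $v_2((n+1)!)=v_2(n!)+v_2(n+1)$ gives the lemma. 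The two arguments are equivalent up to Legendre's formula $v_2(n!)=n-s_2(n)$: your static identity $r_{\tL\tL}+r_{\tO\tL}=s_2$ is the paper's identity with $v_2(n!)$ eliminated, and your incremental formula $s_2(n+1)=s_2(n)+1-v_2(n+1)$ is the factorial-valuation step in disguise. What your version buys is a cleaner separation into two classical ingredients and no factorials at all; its only extra burden is the run-decomposition proof of the static identity, where you correctly observe that maximal runs of $\tL$'s are in bijection with occurrences of $\tO\tL$ precisely because of the leading-block convention in the definition of $r_{\tO\tL}$, and you correctly check the carry-overflow case $n=2^{\ell+1}-1$. Both proofs are complete; yours makes the mechanism (a one-line cancellation against the digit-sum recursion) more transparent, while the paper's is slightly more self-contained at the level of digit manipulations.
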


\begin{proof}
Let $n$ be an integer and $n=\sum_{i=0}^{\ell} \varepsilon_i2^{i}$ be its $2$-ary expansion. Since $r_{\tL\tL}(n)=\sum_{i=0}^{\ell} \varepsilon_{i+1}\varepsilon_{i}$, we have \begin{align*}
n-r_{\tL \tL}(n)&=\sum \limits_{i=0}^{\ell}\left(\varepsilon_i 2^{i}-\varepsilon_{i+1}\varepsilon_{i}\right)=\sum \limits_{i=0}^{\ell}\varepsilon_{i}\left(2^{i}-\varepsilon_{i+1}\right)\\&=\sum \limits_{i=0}^{\ell}\varepsilon_{i}\left(2^{i}-1\right)+\sum \limits_{\substack{i=0 \\ \varepsilon_{i+1}=0}}^{\ell}\varepsilon_{i}=v_2(n!)+r_{\tO \tL}(n).
\end{align*} Thus we have \begin{align*}
n&=r_{\tL \tL}(n)+v_2(n!)+r_{\tO \tL}(n) \\ n+1&=r_{\tL \tL}(n+1)+v_2((n+1)!)+r_{\tO \tL}(n+1)
\end{align*} which implies the result since $v_2((n+1)!)=v_2(n+1)+v_2(n!)$.
\end{proof}

By iterating \Cref{Legendre r11} we have \begin{align}\label{recursion}
r_{\tL\tL}(n+i)=r_{\tL\tL}(n)+i-\sum \limits_{j=1}^{i}v_2(n+j)+\Delta(n,i)
\end{align} where $\Delta(n,i)=r_{\tO\tL}(n)-r_{\tO\tL}(n+i)$, is the term that appear on the right-hand side in the previous lemma. 

\begin{remark}
This relation is very similar to the usual relation for the sum of digits function \begin{align*}
s_q(n+1)=s_q(n)+1-(q-1)v_q(n+1),
\end{align*} that can be proved by using Legendre's formula. We could also prove \Cref{Legendre r11} by using the fact that $s_2(n)=r_{\tL\tL}(n)+r_{\tO\tL}(n)$. 
\end{remark}

In the following lemma, we describe all the possible values of $\Delta(n,1)$. 

\begin{lem}\label{Delta}
Let $n$ be an integer. Then $\Delta(n,1) \in \{-1,0,1\}$. More precisely, let us denote $u=v_2(n+1)$ and $\varepsilon \in \{0,1\}$ such that $n+1\equiv 2^{u} +\varepsilon2^{u+1} \pmod {2^{u+2}}$. We have
\begin{itemize}
\item If $u=0$ and $\varepsilon=0$, $\Delta(n,1)=-1$. 
\item If $u=0$ and $\varepsilon=1$, $\Delta(n,1)=0$. 

\smallskip

\item If $u>0$ and $\varepsilon=0$, $\Delta(n,1)=0$. 
\item If $u>0$ and $\varepsilon=1$, $\Delta(n,1)=1$. 
\end{itemize}

\end{lem}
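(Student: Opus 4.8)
The plan is to read off everything from the binary expansions of $n$ and $n+1$, exploiting that $r_{\tO\tL}$ is a purely local ``transition count''. First I would make explicit the combinatorial meaning of $r_{\tO\tL}$ that is already implicit in the proof of \Cref{Legendre r11}: writing $m=\sum_i \delta_i 2^i$ with the convention $\delta_j=\tO$ for every index $j$ above the most significant digit, the identity used there shows $r_{\tO\tL}(m)=\#\{i\ge 0:\ \delta_i=\tL,\ \delta_{i+1}=\tO\}$, i.e. $r_{\tO\tL}(m)$ counts the maximal runs of $\tL$'s in $(m)_2$, the extra count ``for the most significant digit'' being exactly the implicit leading $\tO$. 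In these terms $\Delta(n,1)=r_{\tO\tL}(n)-r_{\tO\tL}(n+1)$ is a difference of two run counts, and the whole statement reduces to comparing the binary words of $n$ and $n+1$.

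Next I would record precisely how adding $1$ changes the word. Since $u=v_2(n+1)$, the low part of $(n+1)_2$ at positions $0,\dots,u$ is $\tL\,\underbrace{\tO\cdots\tO}_{u}$, while $n=(n+1)-1$ has low part $\tO\,\underbrace{\tL\cdots\tL}_{u}$ at the same positions; all bits at positions $\ge u+1$ are identical for $n$ and $n+1$, and the bit at position $u+1$ equals $\varepsilon$ in both by the normalization $n+1\equiv 2^{u}+\varepsilon 2^{u+1}\pmod{2^{u+2}}$. The key reduction is then immediate: because $n$ and $n+1$ agree at every position $\ge u+1$, each occurrence of the pattern $(\delta_i,\delta_{i+1})=(\tL,\tO)$ with $i\ge u+1$ appears in $(n)_2$ if and only if it appears in $(n+1)_2$, so all such occurrences cancel in $\Delta(n,1)$. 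Hence only the transitions at positions $i\le u$ contribute.

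I would then enumerate those low transitions on each side. For $n$, the bits at positions $0,\dots,u-1$ are $\tL$ and the bit at position $u$ is $\tO$, so the unique low $\tL\tO$-pattern sits at $(u-1,u)$ and is present exactly when $u\ge 1$; for $n+1$, the bits at positions $0,\dots,u-1$ are $\tO$ and the bit at position $u$ is $\tL$, so the unique low $\tL\tO$-pattern sits at $(u,u+1)$ and is present exactly when $\varepsilon=\tO$. This yields $\Delta(n,1)=\mathbf{1}_{[u\ge 1]}-\mathbf{1}_{[\varepsilon=0]}\in\{-1,0,1\}$, and inserting the four combinations of $(u,\varepsilon)$ reproduces the stated table line by line.

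The only delicate points, which I would check explicitly rather than the (trivial) arithmetic, are the two boundary effects. The first is the edge case $u=0$, where the trailing block of $\tL$'s in $(n)_2$ is empty, so one must confirm that $n$ then contributes no low pattern (consistent with $\mathbf{1}_{[u\ge 1]}=0$) and that the pattern at $(0,1)$ for $n+1$ is governed by $\varepsilon$ exactly as in the general case. The second is the junction at position $u+1$: here I must verify that the shared value $\varepsilon$ at position $u+1$ together with the shared higher bits makes the transitions at all $i\ge u+1$ genuinely identical, so that the cancellation is exact; keeping the implicit-leading-$\tO$ convention consistent across both words is precisely what guarantees this, and it is the main thing to get right.
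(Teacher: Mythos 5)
Your proposal is correct and follows essentially the same route as the paper: both arguments compare the binary expansions of $n$ and $n+1$, observe that they differ only in positions $0,\dots,u$ with $n$ ending in $\tO\tL\cdots\tL$ and $n+1$ in $\tL\tO\cdots\tO$, and count the $\tO\tL$-occurrences affected by that low block (the paper writes out the four resulting words explicitly, while you package the same count as $\mathbf{1}_{[u\ge 1]}-\mathbf{1}_{[\varepsilon=0]}$). No gaps.
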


\begin{proof}
We study $\Delta(n,1)=r_{\tO\tL}(n)-r_{\tO\tL}(n+1)$ in more detail. We have the two following cases: \begin{itemize}
\item If $v_2(n+1)=0$,  we have \begin{itemize}
\item If $n+1\equiv 1 \pmod 4$,  we have $(n)_2=\omega\tO\tO$ and $(n+1)_2=\omega\tO\tL$ for some $\omega$ a binary word. Then $\Delta(n,1)=-1$.
\item If $n+1\equiv 3 \pmod 4$,  we have $(n)_2=\omega\tL\tO$ and $(n+1)_2=\omega\tL\tL$ for some $\omega$ a binary word. Then $\Delta(n,1)=0$.
\end{itemize}
\item If $v_2(n+1)=u>0$,  we have \begin{itemize}
\item If $n+1\equiv 2^{u} \pmod {2^{u+2}}$ ie $(n)_2=\omega\tO\tO\tL\cdots\tL$,  with a block $u$ $\tL$-bits. Thus $(n+1)_2=\omega\tO\tL\tO\cdots\tO$, for some $\omega$ a binary word, and $\Delta(n,1)=0$. 
\item If $n+1\equiv 2^{u} +2^{u+1} \pmod {2^{u+2}}$ ie $(n)_2=\omega\tL\tO\tL\cdots\tL$,  with a block $u$ $\tL$-bits. Thus $(n+1)_2=\omega\tL\tL\tO\cdots\tO$, for some $\omega$ a binary word, and $\Delta(n,1)=1$. 
\end{itemize}
\end{itemize}
\end{proof}

\begin{remark}
The dependance of $\Delta(n,1)$ from $v_2(n+1)$ is in fact only a dependance on $v_2(n+1)=0$ or not, i.e. the first digit in base $2$.
\end{remark}

As a direct consequence of this lemma, we have that $\Delta(n,i)\in \{-i,\ldots,i\}$ and it depends only on the $2$-adic valuations of $u_j=v_2(n+j)$ for $1\leq j \leq i$ and on the vector $\varepsilon=(\varepsilon_1,\ldots,\varepsilon_i)$ such that $n+j\equiv 2^{u_j}+\varepsilon_j2^{u_j+1} \pmod{2^{u_j+2}}$. We shall see later that $\Delta(n,i)$ belongs to a much smaller set. 

\smallskip

The following fundamental fact allows us to detect congruences with exponential sums and to study arithmetic properties with analytic tools. 

\begin{lem}  \label{lemme_classique_sum_exp}
Let $a \in \mathbb{Z}$. We have \[\sum \limits_{x=0}^{m-1} e\left( \frac{ax}{m} \right)=\left\{
    \begin{array}{ll}
        m, & \text{si } a \equiv 0 \pmod m,\\
        0, & \text{sinon.}
    \end{array}
\right.\]
\end{lem}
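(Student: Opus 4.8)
The plan is to prove the two cases of the stated dichotomy directly, exploiting that $e(x)=\exp(2\pi i x)$ satisfies $e(y)=1$ precisely when $y\in\mathbb{Z}$. Write $z=e(a/m)$, so that $e(ax/m)=z^{x}$ and the sum becomes the finite geometric sum $\sum_{x=0}^{m-1}z^{x}$. Everything then reduces to understanding when the common ratio $z$ equals $1$.

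First I would dispatch the case $a\equiv 0\pmod m$. Here $a/m$ is an integer multiple of $1$ only when $m\mid a$; more precisely, writing $a=qm$ with $q\in\mathbb{Z}$, we get $ax/m=qx\in\mathbb{Z}$ for every $x$, hence $e(ax/m)=1$. Summing the constant $1$ over the $m$ values $x=0,1,\ldots,m-1$ yields exactly $m$, which is the first line of the claim.

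For the complementary case $a\not\equiv 0\pmod m$, I would observe that $z=e(a/m)\neq 1$, since $a/m\notin\mathbb{Z}$ precisely because $m\nmid a$. This lets me apply the closed form for a geometric series,
\[
\sum_{x=0}^{m-1}z^{x}=\frac{z^{m}-1}{z-1},
\]
whose denominator is nonzero. Since $a\in\mathbb{Z}$, we have $z^{m}=e(a)=\exp(2\pi i a)=1$, so the numerator vanishes and the whole sum is $0$, giving the second line.

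The only point requiring any care — and it is the mild ``obstacle'' here — is the bookkeeping on when $z=1$: one must check that $m\nmid a$ forces $a/m\notin\mathbb{Z}$ so that the geometric-series formula is legitimate (nonzero denominator), while simultaneously $a\in\mathbb{Z}$ guarantees $z^{m}=e(a)=1$ (zero numerator). Both facts are immediate from the definition of $e(\cdot)$, so the argument is entirely elementary and closes at once.
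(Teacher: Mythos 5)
Your proof is correct: the case split on whether $m\mid a$, the reduction to a geometric series with ratio $z=e(a/m)$, and the observation that $z^m=e(a)=1$ while $z\neq 1$ when $m\nmid a$ constitute the standard and complete argument for this orthogonality relation. The paper states this lemma as a classical fact without proof, so there is nothing to compare against; your writeup is exactly the proof one would supply.
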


The following lemma is the Prime Number theorem for arithmetic progressions, see \cite{Apostol1998}[Theorem 7.3]. 

\begin{lem}[Prime Number Theorem for arithmetic progressions]\label{PNTAP}
Let $q\geq 1$ and $a\geq 0$ such that $(a,q)=1$. For all $N\geq 2$, we have \begin{align*}
\pi(N;q,a)=\frac{\pi(N)}{\varphi(q)}(1+o(1))=\frac{1}{\varphi(q)}\frac{x}{\log x}(1+o(1)), \qquad N\rightarrow +\infty.
\end{align*}
\end{lem}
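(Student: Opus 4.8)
The plan is to follow the classical analytic route: reduce $\pi(N;q,a)$ to a weighted sum over the von Mangoldt function $\Lambda$ and separate the residue class $a$ by Dirichlet characters. First I would pass to the Chebyshev-type function
\[
\psi(N;q,a) = \sum_{\substack{n \le N \\ n \equiv a \,(q)}} \Lambda(n),
\]
since $\pi(N;q,a)$ and $\psi(N;q,a)$ are equivalent up to summation by parts and a negligible contribution from proper prime powers. Thus it suffices to prove $\psi(N;q,a) \sim N/\varphi(q)$, whence the stated asymptotic for $\pi(N;q,a)$ follows by partial summation.

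Next I would invoke the orthogonality of the Dirichlet characters modulo $q$: for $(a,q)=1$ and $(n,q)=1$,
\[
\mathbf{1}_{n \equiv a \,(q)} = \frac{1}{\varphi(q)} \sum_{\chi \bmod q} \overline{\chi(a)}\,\chi(n),
\]
so that
\[
\psi(N;q,a) = \frac{1}{\varphi(q)} \sum_{\chi \bmod q} \overline{\chi(a)}\,\psi(N,\chi), \qquad \psi(N,\chi) = \sum_{n \le N} \Lambda(n)\chi(n).
\]
The principal character $\chi_0$ contributes $\tfrac{1}{\varphi(q)}\psi(N)$, which by the classical Prime Number Theorem is $\sim N/\varphi(q)$; this is exactly the main term. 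The whole problem therefore collapses to the bound $\psi(N,\chi) = o(N)$ for each of the $\varphi(q)-1$ non-principal characters.

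The analytic core is the theory of the Dirichlet $L$-functions $L(s,\chi) = \sum_{n \ge 1} \chi(n) n^{-s}$. For non-principal $\chi$ this series continues to an entire function, and $-L'(s,\chi)/L(s,\chi)$ is the generating Dirichlet series of $\Lambda(n)\chi(n)$, so the behaviour of $\psi(N,\chi)$ is governed by the zeros of $L(s,\chi)$. The decisive ingredient is the non-vanishing $L(1+it,\chi) \neq 0$ for all real $t$, and in particular $L(1,\chi) \neq 0$ for the real characters; combined with a standard zero-free region this feeds either a Tauberian theorem or a Perron-formula contour shift to yield $\psi(N,\chi) = o(N)$. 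I expect this non-vanishing on the line $\Re s = 1$, especially $L(1,\chi) \neq 0$ for real $\chi$, to be the main obstacle, since it is precisely the point that separates the arithmetic-progression statement from the ordinary Prime Number Theorem and requires a genuinely arithmetic argument.

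Assembling the contributions gives $\psi(N;q,a) = N/\varphi(q) + o(N)$, and a final partial summation transfers this to $\pi(N;q,a) = \frac{\pi(N)}{\varphi(q)}(1+o(1))$. Since this statement is entirely classical, in practice I would simply cite it from a standard source such as Apostol, but the character decomposition above is the structure I would reconstruct if asked to prove it from scratch.
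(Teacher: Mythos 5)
The paper offers no proof of this lemma at all: it is stated as a classical fact and cited directly from Apostol [Theorem~7.3]. Your sketch --- reduction to $\psi(N;q,a)$, orthogonality of Dirichlet characters, the principal character giving the main term via the ordinary Prime Number Theorem, and the non-vanishing of $L(s,\chi)$ on $\Re s=1$ handling the non-principal characters --- is the standard and correct route, and your closing remark that in practice one would simply cite a standard source is exactly what the paper does.
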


To prove the main theorem, we will study congruence systems of the form \begin{align}\label{congruence system}
\left\{ \begin{array}{ll} 
p+1\equiv 2^{u_1}+\varepsilon_1 2^{u_1+1} \pmod {2^{u_1+2}} \\
\; \; \vdots \\
p+k\equiv 2^{u_k}+\varepsilon_k 2^{u_k+1} \pmod {2^{u_k+2}}  \\
\end{array}
\right. 
\end{align} for some integers $(u_i)$. By the Chinese Remainder Theorem, we have the following lemma. 

\begin{lem} \label{CRT}
Let $\varepsilon=(\varepsilon_1,\ldots,\varepsilon_k)\in \{0,1\}^k$ and $(u_j)_j$ a sequence such that $u_{2j+1}$ is a positive integer and $u_{2j}=0$ for $j\geq 1$. The following system \begin{align*}
p+i\equiv 2^{u_i}+\varepsilon_i 2^{u_i+1} \pmod {2^{u_i+2}}, \; \forall i,
\end{align*} has a solution if and only if \begin{align*}
2^{u_j}+\varepsilon_j2^{u_j+1}-j\equiv 2^{u_{\ell}}+\varepsilon_{\ell}2^{u_{\ell}+1}-\ell \pmod{2^{\min(u_j,u_{\ell})+2}}, \; \forall j\neq \ell,
\end{align*} and the solution belongs to a certain class modulo $2^{\max_{i}(u_i)+2}$. 
\end{lem}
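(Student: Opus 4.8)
The plan is to recognize the system as a collection of linear congruences in the single unknown $p$ whose moduli are all powers of $2$, and then to apply the Chinese Remainder Theorem in its form for non-coprime moduli. First I would rewrite the $i$-th congruence $p+i\equiv 2^{u_i}+\varepsilon_i2^{u_i+1}\pmod{2^{u_i+2}}$ in the standard form $p\equiv a_i\pmod{m_i}$, where $a_i:=2^{u_i}+\varepsilon_i2^{u_i+1}-i$ and $m_i:=2^{u_i+2}$. The crucial structural observation is that, since every $m_i$ is a power of $2$, the moduli are totally ordered by divisibility: for any $j\neq\ell$ one has $\gcd(m_j,m_\ell)=2^{\min(u_j,u_\ell)+2}$, and more globally $\lcm(m_1,\ldots,m_k)=2^{\max_i(u_i)+2}$. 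This is exactly the origin of the two quantities $2^{\min(u_j,u_\ell)+2}$ and $2^{\max_i(u_i)+2}$ appearing in the statement. The specific parity structure of the $(u_j)_j$ plays no role here and only records the setting in which the lemma will be applied.

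For the necessity of the compatibility condition I would argue directly: if $p$ is a solution, then $p\equiv a_j\pmod{m_j}$ and $p\equiv a_\ell\pmod{m_\ell}$ force $a_j\equiv a_\ell\pmod{\gcd(m_j,m_\ell)}$, which, after substituting the values of $a_j,a_\ell$ and of $\gcd(m_j,m_\ell)=2^{\min(u_j,u_\ell)+2}$, is precisely the displayed congruence. For the converse I would exploit the divisibility chain rather than quote a general statement. Let $i_0$ be an index attaining $u_{i_0}=\max_i(u_i)$, so that $m_i\mid m_{i_0}$ for every $i$, and set $p_0:=a_{i_0}$. For each $i$ the pairwise compatibility condition for the pair $(i,i_0)$ reads $a_i\equiv a_{i_0}\pmod{\gcd(m_i,m_{i_0})}$; since $m_i\mid m_{i_0}$ we have $\gcd(m_i,m_{i_0})=m_i$, hence $p_0\equiv a_i\pmod{m_i}$ and $p_0$ solves the $i$-th congruence. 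Thus the pairwise conditions already guarantee a global solution.

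Finally, to pin down the solution class I would note that any solution $p$ satisfies $p\equiv a_{i_0}\pmod{m_{i_0}}$ with $m_{i_0}=2^{\max_i(u_i)+2}$, while all remaining congruences have moduli dividing $m_{i_0}$ and are therefore automatically satisfied once this one holds; hence the solution set is exactly the single residue class $a_{i_0}\pmod{2^{\max_i(u_i)+2}}$, as claimed. The only genuinely delicate point is the sufficiency direction, since in general pairwise compatibility need not imply global solvability. The hypothesis that all moduli are powers of $2$, so that they form a chain under divisibility, is precisely what makes the reduction to the single maximal-modulus congruence valid, and I would make a point of stating this structural fact explicitly rather than invoking the general non-coprime Chinese Remainder Theorem as a black box.
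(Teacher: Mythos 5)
Your proof is correct, and it diverges from the paper's in one meaningful way. The paper simply quotes the Chinese Remainder Theorem for non-coprime moduli as a black box (existence iff $a_j\equiv a_\ell \pmod{\gcd(m_j,m_\ell)}$ for all pairs, uniqueness modulo $\lcm(m_1,\ldots,m_k)$), applies it with $a_i=2^{u_i}+\varepsilon_i2^{u_i+1}-i$ and $m_i=2^{u_i+2}$, and records the identities $\gcd(2^{u_j+2},2^{u_\ell+2})=2^{\min(u_j,u_\ell)+2}$ and $\lcm = 2^{\max_i(u_i)+2}$. You perform the same reduction to standard form but then prove sufficiency from scratch by exploiting the fact that the moduli form a chain under divisibility: picking $i_0$ with $u_{i_0}$ maximal, the pairwise conditions for the pairs $(i,i_0)$ alone give $a_i\equiv a_{i_0}\pmod{m_i}$, so $p_0=a_{i_0}$ solves everything, and the solution set is visibly the single class $a_{i_0}\pmod{2^{\max_i(u_i)+2}}$. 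This buys a self-contained, two-line sufficiency argument and makes the origin of the $\min$ and $\max$ completely transparent; the paper's route is shorter on the page but leans on a theorem it does not prove. One small correction to your closing remark: it is not true that ``in general pairwise compatibility need not imply global solvability'' --- for congruences over $\mathbb{Z}$ the generalized CRT asserts precisely that pairwise compatibility is sufficient for arbitrary moduli, which is why the paper's black-box invocation is legitimate. What the power-of-two hypothesis buys you is not the validity of the result but an elementary proof of it that bypasses the general theorem. Your observation that the parity constraints on the $u_j$ play no role in the lemma itself is accurate.
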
 

\begin{proof}
The Chinese Remainder Theorem for non-coprime moduli states that the system of congruence $x \equiv a_i \pmod a_i$ for $1\leq i \leq r$ has a solution if and only if for each pair $(j,\ell)$, $a_j\equiv a_{\ell} \pmod{\gcd(a_j,a_{\ell})}$. The solution is unique modulo $\lcm(m_1,\ldots,m_r)$. We apply this theorem for $a_i=2^{u_i}+\varepsilon_i2^{u_i+1}-i$ and $m_i=2^{u_{i}+2}$ for all $i\geq 0$. Therefore the lemma is proved sicne for a pair $(j,\ell)$ we have $\gcd(2^{u_j+2},2^{u_{\ell}+2})=2^{\min(u_j,u_{\ell})+2}$ and $\lcm(2^{u_1+2},\ldots,2^{u_k+2})=2^{\max_i(u_i)+2}$. 
\end{proof}

We also have the following lemma~\cite[Lemma 4.1]{AMM2021}, adapted for $q=2$ in this paper. 

\begin{lem}[{\cite[Lemma 4.1]{AMM2021}}]\label{lemma 4.1}
Let $k \in \mathbb{N}$, $p>2$ a prime number. Let $(u_j)_j$, for odd $j$, a sequence of positive integers. Then \[2^{u_j} || p+j, \forall j \Leftrightarrow \left\{ \begin{array}{ll} 
u_i = \max(u_j) > \lfloor \log_2(k-1) \rfloor, \\
2^{u_j} || p+j, \\
u_j=v_2(|i-j|) \text{ for all } j \neq i.
\end{array}
\right.\]
\end{lem}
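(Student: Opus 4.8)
\textbf{Proof proposal for Lemma~\ref{lemma 4.1}.}

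The plan is to establish the biconditional by analyzing what the divisibility conditions $2^{u_j} \,\|\, p+j$ force on the valuations $u_j = v_2(p+j)$, and to extract the arithmetic constraint that a prime $p>2$ imposes. The crucial structural fact is that for two indices $i \neq j$, the difference $(p+i)-(p+j) = i-j$ links their $2$-adic valuations: whenever $v_2(p+i) \neq v_2(p+j)$, we must have $v_2(i-j) = \min\bigl(v_2(p+i), v_2(p+j)\bigr)$, since the valuation of a sum of two dyadic numbers of unequal valuation equals the smaller. This single observation will drive both directions.

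First I would prove the forward implication. Assume $2^{u_j} \,\|\, p+j$ holds for every (odd) $j$ in the relevant range, and let $i$ be an index achieving the maximum $u_i = \max_j(u_j)$. For any $j \neq i$ we have $u_j \leq u_i$; if the inequality were an equality for some $j \neq i$, then $p+i \equiv p+j \equiv 0 \pmod{2^{u_i}}$ would give $2^{u_i} \mid (i-j)$, but $|i-j| \leq k-1$, forcing $u_i \leq \log_2(k-1)$, which I will show is incompatible with the maximality once we observe that the index $i$ of largest valuation is essentially unique. Granting $u_j < u_i$ for $j \neq i$, the valuation-of-unequal-sums principle yields $u_j = v_2(i-j) = v_2(|i-j|)$, which is exactly the third condition on the right. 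Finally, to obtain $u_i > \lfloor \log_2(k-1)\rfloor$, I would argue that among the $k$ consecutive integers $p+1, \ldots, p+k$ (equivalently the indices $j$ in range), exactly one, namely the one divisible by the largest power of $2$, carries the maximal valuation, and its valuation must exceed that of every other term; since every other term $p+j$ with $j\neq i$ has valuation $v_2(|i-j|) \leq \lfloor \log_2(k-1)\rfloor$, strict maximality forces $u_i > \lfloor \log_2(k-1)\rfloor$. This also shows $i$ is the unique maximizer.

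For the reverse implication, I would assume the three right-hand conditions and reconstruct $2^{u_j} \,\|\, p+j$ for all $j$. The condition $2^{u_i} \,\|\, p+i$ is given outright. For $j \neq i$, I have $u_j = v_2(|i-j|)$ and $u_i > \lfloor \log_2(k-1)\rfloor \geq v_2(|i-j|) = u_j$, so $p+i$ and $i-j$ have distinct valuations; then $p+j = (p+i) - (i-j)$ has valuation $\min(u_i, u_j) = u_j$, giving exactly $2^{u_j} \,\|\, p+j$. The role of $p$ being an odd prime is to guarantee $p+j$ is even precisely when $j$ is odd, which is consistent with the hypothesis that $u_j$ is defined for odd $j$; I would note that for even $j$ the term $p+j$ is odd, so $u_j = 0$, matching the convention in Lemma~\ref{CRT}.

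The main obstacle I anticipate is the bookkeeping around the index $i$ of maximal valuation: proving it is unique and that its valuation strictly exceeds $\lfloor \log_2(k-1)\rfloor$ rather than merely meeting it. The delicate point is that two distinct indices cannot both be divisible by $2^{u_i}$ once $2^{u_i}$ exceeds the diameter $k-1$ of the window, so I must carefully track the threshold $\lfloor \log_2(k-1)\rfloor$ and verify the strict inequality is both necessary (forward direction) and sufficient (reverse direction) to separate the maximal term from all others. Handling the boundary cases of small $k$ and confirming the parity constraints align with the primality of $p$ will require some attention, but the core argument reduces entirely to the elementary valuation identity for sums.
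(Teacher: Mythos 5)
The paper does not actually prove this lemma: it is imported, specialized to $q=2$, from \cite[Lemma 4.1]{AMM2021}, so there is no in-paper argument to compare yours against. Your central tool --- $v_2(x-y)=\min\bigl(v_2(x),v_2(y)\bigr)$ when the two valuations differ, applied to $p+j=(p+i)-(i-j)$ --- is the right one, and your reverse direction built on it is sound.

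Your forward direction, however, has a genuine gap at the one step that carries the content of the lemma, namely the strict inequality $u_i>\lfloor\log_2(k-1)\rfloor$. You argue: every $j\neq i$ has $u_j=v_2(|i-j|)\leq\lfloor\log_2(k-1)\rfloor$ and $u_i>u_j$, ``hence'' $u_i>\lfloor\log_2(k-1)\rfloor$. This is a non sequitur: from $u_i>u_j$ and $u_j\leq B$ you only obtain $u_i>\max_{j\neq i}u_j$, and $\max_{j\neq i}v_2(|i-j|)$ can be strictly smaller than $B=\lfloor\log_2(k-1)\rfloor$, since that value is attained only when some admissible odd $j$ with $|i-j|=2^{\lfloor\log_2(k-1)\rfloor}$ lies in $\{1,\dots,k\}$. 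Concretely, take $k=5$ and $p=17$: then $v_2(18)=1$, $v_2(20)=2$, $v_2(22)=1$, so the left-hand side holds with $i=3$ and $\max u_j=2$, while $\lfloor\log_2(k-1)\rfloor=2$, so the strict inequality fails. The honest conclusion of your argument is $u_i>\max_{j\neq i,\ j\ \mathrm{odd}}v_2(|i-j|)$ (which is in fact what the application in \Cref{section_thm2} needs); to get the inequality as printed you would have to revisit how the exponent bound is formulated in \cite{AMM2021} rather than push your current argument further. A second, smaller defect: your elimination of ties $u_j=u_i$ is circular as written, since you appeal to the uniqueness of the maximizing index, which is exactly what is being established at that point. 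The clean route is to note that the numbers $(p+j)/2$ for odd $j$ are consecutive integers, and two consecutive-range integers cannot share the maximal $2$-adic valuation $v$ because a multiple of $2^{v+1}$ would lie strictly between them and would itself belong to the range.
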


This lemma implies that for such a system of congruences as \eqref{congruence system}, only a single $u_j$ is not entirely determined and can vary. 

\smallskip

For sums over prime numbers, it is a standard procedure to introduce the corresponding exponential sums over the integers with weight $\Lambda(n)$. Notice that $r_{\tL\tL}$ is not a \emph{digital function} but rather a \emph{block--digital function} since it depends on blocks of digits of length $2$. Therefore, estimates for digital functions of~\cite{MMR2014} used in~\cite{AMM2021} are no longer useful and we shall use~\cite[Theorem 4]{MR2015} instead.

\begin{lem}[{\cite[Theorem 4]{MR2015}}]
For any integer $N \geq 2$ and for all $(\alpha,\beta)$, we have \begin{align*}
\sum \limits_{n\leq N} \Lambda(n)e(\alpha r_{\tL \tL}(n)+\beta n) \ll \log(N)^{11/4}N^{1-c||\alpha||^2}.
\end{align*}
\end{lem}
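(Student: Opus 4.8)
The plan is to follow the Mauduit--Rivat method for exponential sums of digital type along primes, which is exactly the framework in which \cite{MR2015} is proved; I sketch its main steps. First I would dispose of the von Mangoldt weight by Vaughan's identity, writing $\Lambda(n)$ as a bounded combination of \emph{Type I} sums $\sum_{m\le M}\sum_{n} a_m\, e(\alpha r_{\tL\tL}(mn)+\beta mn)$ and \emph{Type II} sums $\sum_{m}\sum_{n} a_m b_n\, e(\alpha r_{\tL\tL}(mn)+\beta mn)$, with the ranges of $m$ chosen so that the resulting powers of $N$ balance. The whole problem then reduces to a bound, uniform in the linear phase $\beta$, on the correlation of $u\mapsto e(\alpha r_{\tL\tL}(u))$ with short arithmetic and multiplicative progressions.

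The analytic heart is a \emph{Fourier decay estimate} for the block function truncated to its $\lambda$ lowest binary digits. Writing $r_{\tL\tL}^{(\lambda)}(u)$ for the number of blocks $\tL\tL$ among the $\lambda$ least significant bits of $u$, I would establish
\[
\max_{t\in\mathbb{R}}\left| \frac{1}{2^\lambda}\sum_{0\le u<2^\lambda} e\!\left(\alpha r_{\tL\tL}^{(\lambda)}(u)-tu\right)\right| \ll 2^{-c\lambda\lVert\alpha\rVert^2}.
\]
Because $r_{\tL\tL}$ is \emph{block}-additive rather than additive, the naive factorisation of this inner sum over single digits fails; instead one exploits that $r_{\tL\tL}$ is produced by the finite automaton of Figure~1, so the sum factors as a product of $4\times4$ \emph{transfer matrices}, one per digit, whose entries carry the phases $e(\alpha)$ from the automaton outputs and $e(-t\,2^{j})$ from the additive character. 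The estimate then follows by showing that the averaged transfer matrix has operator norm strictly below $1$ away from $\alpha\in\mathbb{Z}$, with a spectral gap of size $\asymp\lVert\alpha\rVert^2$; this is what produces the crucial exponent $N^{1-c\lVert\alpha\rVert^2}$ and is the step I expect to be the main obstacle.

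To return from the truncated function to $r_{\tL\tL}$ itself, I would use a \emph{carry propagation} lemma: the difference $r_{\tL\tL}(mn)-r_{\tL\tL}^{(\lambda)}(mn)$ depends only on digits above position $\lambda$ and is insensitive to the residue of $mn$ modulo $2^\lambda$ except for $u$ in a set of relative density $O(2^{-\lambda})$ — a block-function analogue of \Cref{Legendre r11} and \Cref{Delta}, controlling how a block straddling the truncation boundary may change. Choosing $\lambda\asymp\kappa\log_2 N$ with a small constant $\kappa$ renders this truncation error negligible while preserving the Fourier saving $2^{-c\lambda\lVert\alpha\rVert^2}=N^{-c'\lVert\alpha\rVert^2}$ at the required strength.

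Finally I would insert this Fourier estimate into the Type I and Type II sums. For Type I, detecting the residue $mn\bmod 2^\lambda$ via \Cref{lemme_classique_sum_exp} and evaluating the resulting inner sum directly gives the bound. For Type II, one applies Cauchy--Schwarz followed by a van der Corput shift in $n$, which replaces the phase by a difference $r_{\tL\tL}^{(\lambda)}(n(m+r))-r_{\tL\tL}^{(\lambda)}(nm)$; bounding the ensuing double exponential sum again by the transfer-matrix estimate and optimising over the Vaughan ranges yields $\sum_{n\le N}\Lambda(n)e(\alpha r_{\tL\tL}(n)+\beta n)\ll(\log N)^{11/4}N^{1-c\lVert\alpha\rVert^2}$, the power $11/4$ being the logarithmic loss accumulated through Vaughan's identity together with the dyadic decomposition.
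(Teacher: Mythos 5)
This lemma is not proved in the paper at all: it is imported verbatim from \cite[Theorem~4]{MR2015} and used as a black box, so there is no internal proof to compare yours against. Judged on its own terms, your outline correctly reconstructs the architecture of the Mauduit--Rivat argument behind the cited result: reduction to Type~I/Type~II bilinear sums, a uniform Fourier-decay (``Fourier property'') estimate for the truncation of $r_{\tL\tL}$ to the low-order digits via products of transfer matrices attached to the generating automaton, a carry-propagation lemma to pass from the truncated function back to $r_{\tL\tL}$, and Cauchy--Schwarz plus a van der Corput shift for the Type~II sums. That is indeed how the theorem is proved in the literature, and your remark that block-additivity forces a matrix product rather than a digit-by-digit factorisation is exactly the right diagnosis of why the sum-of-digits machinery of \cite{MMR2014} does not apply here.

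As a proof, however, what you have written is a roadmap with the two decisive steps asserted rather than established. First, the claim that the averaged transfer matrix has operator norm bounded by $1-c\lVert\alpha\rVert^2$ uniformly in the additive character $t$ is precisely the content of the Fourier property for the Rudin--Shapiro-type phase, and it is the technical heart of \cite{MR2015}; nothing in your sketch indicates how the spectral gap is actually extracted (one has to control products of the matrices over all digit strings, not just the average of a single one, and the uniformity in $t$ is what makes this delicate). Second, the Type~II estimate after the van der Corput shift produces a correlation of two truncations at shifted arguments, and bounding it requires a separate argument (a ``sum-of-two-digital-functions'' Fourier estimate), not merely a second application of the same transfer-matrix bound. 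Finally, the exponent $11/4$ is not something you can predict from ``logarithmic losses in Vaughan's identity'' in the abstract; it comes out of the specific optimisation in \cite{MR2015}. So if your goal was to justify the lemma independently, there is a genuine gap at the Fourier-property step; if your goal was to identify where the lemma comes from and why it is plausible, you have done that accurately.
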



\section{Proof of \texorpdfstring{\Cref{First thm}}{Lg}} \label{section_thm1}

We first introduce some lemmas to prove \Cref{First thm}.

\begin{lem} \label{lemma1}
Let $\alpha,\beta \in \mathbb{R}$ and $(y_n)_{n \in  \mathbb{N}}$ be a sequence of complex numbers. Then for any integer $i\geq 1$ and for any integer $N$ large enough, we have \begin{align*}
\sum \limits_{n\leq N}y_n e(\alpha v_2(n+i)+\beta\Delta(n,i)) \ll_{i}(\log N)^i\max_{r\in [0,1)} \left| \sum \limits_{n\leq N}y_ne(rn) \right| .
\end{align*} 
\end{lem}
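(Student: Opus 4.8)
The plan is to exploit the fact that, for fixed $i$, the phase $\alpha v_2(n+i)+\beta\Delta(n,i)$ depends on $n$ only through finitely many congruence conditions modulo powers of $2$. Write $u_j=v_2(n+j)$ for $1\le j\le i$ and let $\varepsilon_j\in\{0,1\}$ be determined by $n+j\equiv 2^{u_j}+\varepsilon_j2^{u_j+1}\pmod{2^{u_j+2}}$. By the consequence of \Cref{Delta} recorded after its proof, $\Delta(n,i)$ depends only on the tuple $(u_1,\dots,u_i,\varepsilon_1,\dots,\varepsilon_i)$, and of course so does $v_2(n+i)=u_i$. Hence $e(\alpha v_2(n+i)+\beta\Delta(n,i))$ takes a constant value, of modulus $1$, on the set of $n$ yielding a fixed such tuple.

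First I would partition the range $n\le N$ according to these tuples. For a fixed tuple the $i$ conditions $n+j\equiv 2^{u_j}+\varepsilon_j2^{u_j+1}\pmod{2^{u_j+2}}$ simultaneously pin down $v_2(n+j)=u_j$ and the bit $\varepsilon_j$, since the congruence modulo $2^{u_j+2}$ forces the bit at position $u_j$ to be $\tL$, all lower bits to be $\tO$, and the bit at position $u_j+1$ to equal $\varepsilon_j$. Combining the $i$ congruences through the Chinese Remainder Theorem (\Cref{CRT}), the set of valid $n$ is either empty or a single residue class $n\equiv a\pmod{2^{L}}$ with $L=\max_j u_j+2$. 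This rewrites the sum as
\[
\sum_{n\le N} y_n\, e\bigl(\alpha v_2(n+i)+\beta\Delta(n,i)\bigr)=\sum_{(u,\varepsilon)} e(\phi_{u,\varepsilon})\sum_{\substack{n\le N\\ n\equiv a\ (2^{L})}} y_n,
\]
where each $|e(\phi_{u,\varepsilon})|=1$, the class $a$ and modulus $2^L$ depend on $(u,\varepsilon)$, and empty classes contribute $0$.

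Next I would detect each residue class by additive characters via \Cref{lemme_classique_sum_exp}: writing the indicator of $n\equiv a\pmod{2^{L}}$ as $2^{-L}\sum_{t=0}^{2^{L}-1}e(t(n-a)/2^{L})$ and exchanging the order of summation gives
\[
\Bigl|\sum_{\substack{n\le N\\ n\equiv a\ (2^{L})}} y_n\Bigr|\le \frac{1}{2^{L}}\sum_{t=0}^{2^{L}-1}\Bigl|\sum_{n\le N} y_n\, e\bigl(\tfrac{t}{2^{L}}n\bigr)\Bigr|\le \max_{r\in[0,1)}\Bigl|\sum_{n\le N} y_n\, e(rn)\Bigr|.
\]
The crucial feature is that the factor $2^{-L}$ cancels the $2^{L}$ terms, so each tuple contributes at most $\max_r|\sum_n y_n e(rn)|$ no matter how large $L$ is.

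Finally I would count the tuples. Since $n+j\le N+i$, each $u_j$ ranges over $\{0,1,\dots,\lfloor\log_2(N+i)\rfloor\}$, giving $\ll\log N$ choices per coordinate, while each $\varepsilon_j\in\{0,1\}$ contributes a bounded factor; with $i$ valuations and $i$ bits the number of admissible tuples is $\ll_i(\log N)^i$. Multiplying the per-tuple bound by this count yields the claim. The only genuinely delicate point is the first step: one must be certain the phase is truly constant on each tuple class, that is, that $\Delta(n,i)$ has no dependence on $n$ beyond $(u,\varepsilon)$, which is exactly the content of \Cref{Delta} and the remark following it. Everything afterward is bookkeeping — the Chinese Remainder Theorem, the orthogonality of characters, and the crude counting of valuations.
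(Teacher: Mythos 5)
Your proof is correct and follows the same overall strategy as the paper --- slice the range of summation by the $2$-adic data of $n+1,\dots,n+i$ so that the phase $\alpha v_2(n+i)+\beta\Delta(n,i)$ is constant on each slice, detect the slices with additive characters via \Cref{lemme_classique_sum_exp}, and pay a factor $(\log N)^i$ for the number of slices --- but your decomposition is genuinely different. The paper records only the exact valuations $v_2(n+j)=u_j$, converts each into divisibility conditions by the inclusion--exclusion ``$v_2(n+j)=u_j$ iff $2^{u_j}\mid n+j$ minus $2^{u_j+1}\mid n+j$'', obtains $2^i$ signed sums $T_\lambda$ whose defining constraints are of the form $2^{u_j+\varepsilon_j}\mid n+j$, and detects each of these $i$ divisibilities by its own character sum, expanding the product. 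You instead partition directly by the full congruences $n+j\equiv 2^{u_j}+\varepsilon_j 2^{u_j+1}\pmod{2^{u_j+2}}$, merge them through \Cref{CRT} into a single residue class modulo $2^{\max_j u_j+2}$, and detect that one class; the extra factor $2^i$ from the bits $\varepsilon_j$ is harmlessly absorbed into the implied constant. The costs are identical, but your route buys something at the one delicate point you correctly identify: on your finer slices the constancy of $\Delta(n,i)$ is exactly the content of the remark following \Cref{Delta}, whereas in the paper's version the pieces $T_\lambda$ are cut out by divisibility conditions alone, which do not by themselves pin down the digits at positions $u_j+1$ on which $\Delta(n,i)$ depends, so the constancy of $\Delta_\lambda$ there requires an extra (implicit) argument. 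Your write-up is therefore not only correct but somewhat cleaner at that step.
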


The main difference with~\cite[Lemma 3.1]{AMM2021} is that we have to deal with more congruence systems, implying that we have the power of $\log N$ instead. 

\begin{proof}
By~\Cref{Delta}, a system of congruences fully determines the value of $\Delta(n,i)$. Let $M=\lfloor \log_2(N+i) \rfloor$. We write the sum \begin{align*}
S=\sum \limits_{n\leq N}y_n e(\alpha v_2(n+i)+\beta\Delta(n,i))
\end{align*} in the following way \begin{align*}
S=\sum \limits_{(u_1,\ldots,  u_i) \in [M]^{i}} \sum \limits_{\substack{n\leq N \\ v_2(n+j)=u_j \\ 1\leq j\leq i}} y_n e(\alpha u_i +\beta \Delta(n,i)).
\end{align*} Since $v_2(n+1)=u_1$ if and only if $ n+1 \equiv 0  \pmod {2^{u_1}}$ and $n+1 \not \equiv 0~(\Mod 2^{u_1+1})$, we have \[S=S_0-S_1,\] where \begin{align*}
S_0&=\sum \limits_{(u_1,\ldots,  u_i) \in [M]^i } \sum \limits_{\substack{n\leq N \\ v_2(n+j)=u_j \\ 2\leq j\leq i}}\sum \limits_{n+1 \equiv 0~(\Mod {2^{u_1}})}y_n e(\alpha u_i +\beta \Delta(n,i)), \\ S_1&=\sum \limits_{(u_1,\ldots,  u_i) \in [M]^i } \sum \limits_{\substack{n\leq N \\ v_2(n+j)=u_j \\  2\leq j\leq i}}\ \sum \limits_{n+1 \equiv 0~(\Mod {2^{u_1+1}})}y_n e(\alpha u_i +\beta \Delta(n,i)).
\end{align*} Then we apply the same cutting on $S_0$ and $S_1$ according to the values of $v_2(n+2)=u_2$. By applying this cutting on $v_2(n+j)$ for every $j\geq 1$, we have \begin{align*}
S=T_0+T_1+\cdots+T_{2^{i}-1},
\end{align*} where $T_{\lambda}$, $0\leq \lambda < 2^{i}$, is of the form \begin{align*}
T_{\lambda}=(-1)^{\tilde{\varepsilon_1}}\sum \limits_{(u_1,\ldots,  u_i) \in [M]^i } \sum \limits_{ \substack{n\leq N \\ n+j \equiv 0~(\Mod 2^{u_j+\varepsilon_j}) \\  1\leq j \leq i}} y_n e(\alpha u_i+\beta\Delta(n,i) )
\end{align*} with  $\lambda=\sum_{1\leq j \leq i}\varepsilon_j2^{j-1}$, for some $\varepsilon=(\varepsilon_1,\ldots,\varepsilon_i) \in \{0,1\}^{i}$ and let us recall that $\tilde{\varepsilon_1}$ is defined by $\tilde{\varepsilon_1}=\varepsilon_1+\cdots+\varepsilon_i$. By~\Cref{Delta}, the value $\Delta(n,i)$ only depends of $\lambda$ in the inner sum of $T_{\lambda}$, denoted by $\Delta_{\lambda}$. Thus, we have \begin{align*}
T_{\lambda}=(-1)^{\tilde{\varepsilon_1}}\sum \limits_{(u_1,\ldots,  u_i) \in [M]^i }e(\alpha u_i+\beta \Delta_{\lambda}) \sum \limits_{ \substack{n\leq N \\ n+j \equiv 0~(\Mod 2^{u_j+\varepsilon_j}) \\ 1\leq j \leq i}} y_n. 
\end{align*} By~\Cref{lemme_classique_sum_exp}, we can translate the set of solutions of the following system of congruences \[\left\{
    \begin{array}{ll}
    	&n+1\equiv 0 \pmod {2^{u_1+\varepsilon_1}},\\
    	&\; \; \vdots \\ 
    	&n+i\equiv 0 \pmod {2^{u_i+\varepsilon_i}},
    \end{array}
\right. \] by an exponential sum. Then we have \begin{align*}
T_{\lambda}&=(-1)^{\tilde{\varepsilon_1}}\sum \limits_{(u_1,\ldots,  u_i) \in [M]^i }e\left( \alpha u_i+\beta \Delta_{\lambda} \right) \\ & \qquad \qquad \times \sum \limits_{n\leq N} y_n \left(\frac{1}{2^{u_1+\varepsilon_1}} \sum \limits_{0\leq \ell_1 <2^{u_1+\varepsilon_1}}e\left(\frac{\ell_1(n+1)}{2^{u_1+\varepsilon_1}} \right)\right) \\ &\qquad \qquad \times \cdots \times \left(\frac{1}{2^{u_i+\varepsilon_i}} \sum \limits_{0\leq \ell_i <2^{u_i+\varepsilon_i}}e\left(\frac{\ell_i(n+i)}{2^{u_i+\varepsilon_i}}\right)\right) \\
&=(-1)^{\tilde{\varepsilon_1}}\sum \limits_{(u_1,\ldots,  u_i) \in [M]^i } e\left(\alpha u_i+\beta \Delta_{\lambda}\right) \frac{1}{2^{u_1+\cdots+u_i+\tilde{\varepsilon_1}}} \\ & \qquad \qquad \times \sum \limits_{ \substack{0\leq \ell_j < 2^{u_j+\varepsilon_j}\\ 1\leq j\leq i}} e\left(\frac{\ell_1}{2^{u_1+\varepsilon_1}}+\cdots+ \frac{i\ell_i}{2^{u_i+\varepsilon_i}} \right) \\ & \qquad \qquad \times \sum \limits_{n\leq N} y_n e\left(\left(\frac{\ell_1}{2^{u_1+\varepsilon_1}}+\cdots+ \frac{\ell_i}{2^{u_i+\varepsilon_i}}\right)n\right).
 \end{align*} Thus for every $0\leq \lambda <2^{i}$, we have \begin{align*}
T_{\lambda} \ll M^i \max \limits_{r\in [0,i)} \left| \sum \limits_{n\leq N}y_ne(rn) \right|= M^i \max \limits_{r\in [0,1)} \left| \sum \limits_{n\leq N}y_ne(rn) \right|,
\end{align*} by periodicity of the function $e(x)$.
\end{proof}

\begin{lem}\label{estimate1}
Let $k\geq 2$, $(\alpha_1,\ldots,\alpha_k)\in \mathbb{R}^k$, $(\beta_1,\ldots,\beta_k)\in \mathbb{R}^k$ and $(y_n)_{n \geq 0}$ a sequence of complex numbers. Then for any fixed $k \in \mathbb{N}$ and for any integer $N$ large enough we have \begin{align*}
\sum \limits_{n\leq N}y_n e\left (\sum \limits_{1\leq i \leq k}\alpha_i v_2(n+i)+\sum \limits_{1\leq i \leq k}\beta_i\Delta(n,i)\right) \\ 
\ll_{k}(\log N)^K \max_{r\in [0,1)} \left| \sum \limits_{n\leq N} y_ne(rn) \right|
\end{align*} with $K=k(k+1)/2$. 
\end{lem}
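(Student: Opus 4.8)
The plan is to prove \Cref{estimate1} by induction on $k$, peeling off one index at a time by means of \Cref{lemma1}. The base case is $k=1$: here the asserted sum is exactly the sum treated in \Cref{lemma1} with $i=1$, and the exponent matches since $k(k+1)/2 = 1$. For the inductive step I assume the statement for $k-1$, valid for \emph{every} sequence of complex numbers, and factor the exponential according to its top index $k$:
\begin{align*}
e\Bigl(\sum_{1 \leq i \leq k} \alpha_i v_2(n+i) + \sum_{1 \leq i \leq k} \beta_i \Delta(n,i)\Bigr)
&= e\bigl(\alpha_k v_2(n+k) + \beta_k \Delta(n,k)\bigr) \\
&\quad \times e\Bigl(\sum_{1 \leq i \leq k-1} \alpha_i v_2(n+i) + \sum_{1 \leq i \leq k-1} \beta_i \Delta(n,i)\Bigr).
\end{align*}
I absorb the second factor into the coefficients, setting
\[ z_n = y_n\, e\Bigl(\sum_{1 \leq i \leq k-1} \alpha_i v_2(n+i) + \sum_{1 \leq i \leq k-1} \beta_i \Delta(n,i)\Bigr), \]
so that the sum to be estimated becomes $\sum_{n\leq N} z_n\, e(\alpha_k v_2(n+k) + \beta_k \Delta(n,k))$.

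Next I apply \Cref{lemma1} with $i=k$ to this sum (with sequence $(z_n)_n$), which gives the bound
\[ \ll_k (\log N)^k \max_{r \in [0,1)} \Bigl| \sum_{n \leq N} z_n e(rn) \Bigr|. \]
It remains to control the inner maximum. For each fixed $r$ I unwind $z_n$ and regroup the linear phase:
\[ \sum_{n \leq N} z_n e(rn) = \sum_{n \leq N} \bigl(y_n e(rn)\bigr)\, e\Bigl(\sum_{1 \leq i \leq k-1} \alpha_i v_2(n+i) + \sum_{1 \leq i \leq k-1} \beta_i \Delta(n,i)\Bigr), \]
which is a sum of exactly the shape governed by the induction hypothesis, now applied to the sequence $(y_n e(rn))_n$ with coefficient vectors $(\alpha_1,\dots,\alpha_{k-1})$ and $(\beta_1,\dots,\beta_{k-1})$. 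The hypothesis bounds it by
\[ \ll_{k-1} (\log N)^{(k-1)k/2} \max_{r' \in [0,1)} \Bigl| \sum_{n \leq N} y_n e\bigl((r+r')n\bigr) \Bigr| = (\log N)^{(k-1)k/2} \max_{s \in [0,1)} \Bigl| \sum_{n \leq N} y_n e(sn) \Bigr|, \]
where the last equality uses that $r' \mapsto r+r'$ permutes $[0,1)$ modulo $1$ together with the periodicity of $e$, so the resulting quantity is independent of $r$. Taking the maximum over $r$ on the left of the \Cref{lemma1} bound and combining the two estimates multiplies the powers of $\log N$; the final exponent is $k + (k-1)k/2 = k(k+1)/2 = K$, as claimed.

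The argument carries no genuine analytic content beyond \Cref{lemma1}; it is essentially careful bookkeeping. The point that needs the most care is the treatment of the auxiliary linear phase $e(rn)$: after each application of \Cref{lemma1} one is left with a maximum over a phase $r$, and this phase must be folded into the sequence before the induction hypothesis is invoked, after which the translation invariance $\max_{r'}\lvert \sum y_n e((r+r')n)\rvert = \max_{s}\lvert \sum y_n e(sn)\rvert$ restores a single clean maximum that no longer depends on $r$. Tracking this absorption across all $k$ peeling steps is precisely what accumulates the exponent $1 + 2 + \cdots + k = k(k+1)/2$, rather than producing only a linear power of $\log N$.
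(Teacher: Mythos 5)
Your proof is correct and follows essentially the same route as the paper: the paper also peels off the top index, applies \Cref{lemma1} with $i=k$ to the modified sequence, and iterates $k$ times to accumulate the exponent $1+2+\cdots+k=k(k+1)/2$. Your version is simply a more careful formalization (explicit induction, explicit absorption of the phase $e(rn)$ into the sequence, and the translation-invariance of the maximum over $r\in[0,1)$), all of which the paper leaves implicit.
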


\begin{proof}
By \Cref{lemma1}, we have \begin{align*}
\sum \limits_{n\leq N}&y_n e\left (\sum \limits_{1\leq i \leq k}\alpha_i v_2(n+i)+\sum \limits_{1\leq i \leq k}\beta_i\Delta(n,i)\right)\\&=\sum \limits_{n\leq N}y_n e\left (\sum \limits_{1\leq i \leq k-1}\alpha_i v_2(n+i)+\sum \limits_{1\leq i \leq k-1}\beta_i\Delta(n,i)\right)e\left (\alpha_k v_2(n+k)+\beta_k\Delta(n,k)\right), \\ & \ll_{k}(\log N)^K \max_{r\in [0,1)} \left| \sum \limits_{n\leq N} y_ne(rn)e\left(\sum \limits_{1\leq i \leq k-1}\beta_i\Delta(n,i)\right) \right|. 
\end{align*}
Then we get the desired bound by applying $k$ times the precedent lemma. 
\end{proof}

We are now ready to prove \Cref{First thm}.  

\begin{proof}[Proof of \Cref{First thm}]
Set \begin{align*}
\Psi(N)=\sum \limits_{n\leq N}\Lambda(N)e\left(\alpha_0 r_{\tL \tL}(n) + \cdots + \alpha_k r_{\tL \tL}(n+k) \right).
\end{align*} From \eqref{recursion} we obtain \begin{align}\label{recursion_bis}
\sum \limits_{i=0}^k \alpha_i r_{\tL\tL}(n+i) = \tilde{\alpha} + \tilde{\alpha_0}r_{\tL\tL}(n) - \sum \limits_{i=1}^k \tilde{\alpha_i}v_2(n+i) +\sum \limits_{i=1}^k \alpha_i \Delta(n,i).
\end{align}
Then by \Cref{estimate1} and \eqref{recursion_bis} with $y_n=\Lambda(N)e(\tilde{\alpha_0}r_{\tL\tL}(n))$ for all $n\leq N$, we have \begin{align*}
\Psi(N)&=e(\tilde{\alpha}) \sum \limits_{n\leq N} \Lambda(N)e\left(\tilde{\alpha_0}r_{\tL\tL}(n) - \sum \limits_{i=1}^k \tilde{\alpha_i}v_2(n+i) +\sum \limits_{i=1}^k \alpha_i \Delta(n,i)\right). \\ & \ll_k (\log N)^K \max \limits_{r \in [0,1)}\left| \sum \limits_{n\leq N}\Lambda(N)e(\tilde{\alpha_0}r_{\tL\tL}(n)+rn) \right|.  \\ & \ll_k (\log N)^{K+11/4}N^{1-c||\tilde{\alpha_0}||^2}.
\end{align*} Thus by partial summation, \begin{align*}
\mathcal{S}_{\alpha}(N)\ll \frac{1}{\log N} \max \limits_{m\leq N} |\Psi(m)| + \sqrt{N}, 
\end{align*} and the theorem is proved. 
\end{proof}


\section{Proof of \texorpdfstring{\Cref{main_thm}}{Lg}}  \label{section_thm2}

In this section, we prove \Cref{main_thm}, and we start by establishing \eqref{main_thm1}, the first statement of the theorem. The proof of this case is split into two cases, according to the parity of $k$. The case $k$ odd is the most difficult since \Cref{First thm} is insufficient. 

\subsection{First statement of \Cref{main_thm} and $k$ even}

Let $k\geq 1$ be even and $\alpha=(1/2,\ldots,1/2) \in \mathbb{R}^{k+1}$. 

\begin{proof}
We have $\tilde{\alpha_0}=\frac{k+1}{2}$ and $\lVert \tilde{\alpha_0} \rVert =\frac{1}{2}$. Thus, by \Cref{First thm}, we have \begin{align*}
\mathcal{S}_{k}(N)=\mathcal{S}_{\alpha}(N) \ll (\log N)^K N^{1-\sigma},
\end{align*} for $\sigma=\min(1/2,c/4)$ with $c>0$. Thus, by \Cref{PNTAP}, we have \begin{align*}
\lim \limits_{N\rightarrow +\infty} \frac{1}{\pi(N)}\sum \limits_{p\leq N} (-1)^{r_{\tL\tL}(p)+\cdots+r_{\tL\tL}(p+k)}=0. 
\end{align*} 
\end{proof}

\subsection{First statement of \Cref{main_thm} and $k$ odd}

Let $k\geq 1$ be odd and $\alpha=(\alpha_0,\ldots,\alpha_k) \in \mathbb{R}^{k+1}$ be such that $\tilde{\alpha_0}=h \in \mathbb{Z}$. 

\smallskip

Let us denote $\Lambda_k$ the set of all possible values of $\Delta_k(p)=(\Delta(p,1),\ldots, \Delta(p,k))$. Since $\Delta_k(p)$ is fully determined by the values of $(\Delta(p,1),\Delta(p+1,1),\ldots,\Delta(p+k-1,1))$, and each term has two possible values, we have $\Card(\Lambda_k)=2^k$. Let us denote \begin{align*}
V_{b,a}(N)=\sum \limits_{p\leq N}e\left(\sum \limits_{i=1}^{k}b_iv_2(p+i)+\sum \limits_{i=1}^{k}a_i\Delta(p,i)\right)
\end{align*}
for real vectors $b=(b_1,\ldots,b_k)$ and $a=(a_1,\ldots,a_k)$. 

\begin{proof}
Let us recall that \begin{align*}
\sum \limits_{i=0}^k \alpha_i r_{\tL\tL}(n+i) = \tilde{\alpha} + \tilde{\alpha_0}r_{\tL\tL}(n) - \sum \limits_{i=1}^k \tilde{\alpha_i}v_2(n+i) +\sum \limits_{i=1}^k \alpha_i \Delta(n,i).
\end{align*} Therefore, we have \begin{align*}
S_{\alpha}(N)&=\sum \limits_{p\leq N} e\left(\sum \limits_{i=0}^k \alpha_i r_{\tL\tL}(n+i) \right), \\ 
&=\sum \limits_{p\leq N} e\left( \tilde{\alpha} + \tilde{\alpha_0}r_{\tL\tL}(n) - \sum \limits_{i=1}^k \tilde{\alpha_i}v_2(n+i) +\sum \limits_{i=1}^k \alpha_i \Delta(n,i) \right), \\
&= e( \tilde{\alpha} ) V_{b,\alpha'}(N),
\end{align*} for $b=(\tilde{\alpha_1},\ldots,\tilde{\alpha_k})$ and $\alpha'=(\alpha_1,\ldots,\alpha_k)$ since $e(\tilde{\alpha_0}r_{\tL\tL}(n))=1$ by the hypothesis $\tilde{\alpha_0}=h \in \mathbb{Z}$. In the following, we denote $\alpha'$ by $\alpha$ since only $\alpha_0$ is omitted. Therefore we now focus on the study of $V_{b,\alpha}(N)$. 

\smallskip

Let $M=\log _2(N+k)$, thus we have \begin{align} \label{Vba}
V_{b,\alpha}(N)=\sum \limits_{\Delta \in \Lambda_k} e(\alpha \cdot \Delta)\sum \limits_{\substack{0\leq u_i \leq M \\ 1\leq i \leq k}} e(b\cdot u)\sum \limits_{\substack{p\leq N \\ 2^{u_i} || p+i \\ 1\leq i \leq k \\ \Delta_k(p)=\Delta}} 1 +O(1)
\end{align} since we can remove $p=2$ from the sum. 

\smallskip 

Let $\Delta=(\Delta_1,\ldots,\Delta_k) \in \Lambda_k$ and $0\leq u_i \leq M, 1\leq i \leq k$. The congruence system that appears in the inner sum of $\eqref{Vba}$ is \begin{align*} \left\{ \begin{array}{ll} 
\Delta(p,1)=\Delta_1, \; v_2(p+1)=u_1, \\
\; \; \vdots \\
\Delta(p,k)=\Delta_k, \; v_2(p+k)=u_k. \\
\end{array}
\right. 
\end{align*} This system can be traduced uniquely by a system of congruences for a vector $\varepsilon=(\varepsilon_1,\ldots,\varepsilon_k)\in \{0,1\}^k$ such that \begin{align}\label{congruence system_2}
\left\{ \begin{array}{ll} 
p+1\equiv 2^{u_1}+\varepsilon_1 2^{u_1+1} \pmod {2^{u_1+2}} \\
\; \; \vdots \\
p+k\equiv 2^{u_k}+\varepsilon_k 2^{u_k+1} \pmod {2^{u_k+2}}  \\
\end{array}
\right. 
\end{align} By \Cref{lemma 4.1} , a solution of \eqref{congruence system_2} implies that \[ \left\{ \begin{array}{ll} 
u_i = \max(u_j) > \lfloor \log_2(k-1) \rfloor, \\
2^{u_j} || p+j, \\
u_j=v_2(|i-j|) \text{ for all } j \neq i,
\end{array}
\right.\] for some odd $i$. 

\smallskip

Let us define $\varepsilon'=(\varepsilon_1,\ldots,\varepsilon_{i-1},1-\varepsilon_i, \varepsilon_{i+1},\ldots, \varepsilon_k)$, where $i$ is given by the system \eqref{congruence system_2} and \Cref{lemma 4.1}, and let $\Delta'$ be the associated vector of $\Lambda_k$. Notice that there does not necessarily exist a solution of the system \eqref{congruence system_2}. But in this case, both sums associated with $\Delta$ and $\Delta'$ will contribute $0$ to the final sum.

\smallskip

Without loss of generality, we can suppose that $\Delta(p+i,1)=0$ and $\Delta'(p+i,1)=1$ since we can exchange $\Delta$ and $\Delta'$,  otherwise. Therefore, we have \[\left\{ \begin{array}{ll} 
\Delta(p,1)=\lambda_1 \\
\vdots \\ 
\Delta(p+i,1)=0 \\
\vdots \\
\Delta(p+k-1,1)=\lambda_k \\
\end{array}
\right. \] for some $\lambda_{2j} \in \{0,1\}$, $\lambda_{2j+1} \in \{-1,0\}$, $1\leq 2j+1 \leq k$ and $2j+1\neq i$. Then we have \[ \Delta=\begin{pmatrix}
\lambda_1 \\
\vdots \\ 
\lambda_1 +\cdots +\lambda_{i-1}+0 \\ 
\vdots \\ 
\lambda_1 +\cdots +0+\cdots+\lambda_{k}
\end{pmatrix}. \] We also have \[\left\{ \begin{array}{ll} 
\Delta'(p,1)=\lambda_1 \\
\vdots \\ 
\Delta'(p+i,1)=1 \\
\vdots \\
\Delta'(p+k-1,1)=\lambda_k \\
\end{array}
\right.\] and \[\Delta'=\begin{pmatrix}
\lambda_1 \\
\vdots \\ 
\lambda_1 +\cdots +\lambda_{i-1}+1 \\ 
\vdots \\ 
\lambda_1 +\cdots +1+\cdots+\lambda_{k}
\end{pmatrix}. \] We can write the sum \begin{align} \label{final_sum}
V_{b,\alpha}(N)=\sum \limits_{(\Delta,\Delta') \in \Lambda_k^2}( e(\alpha \cdot \Delta)+e(\alpha \cdot \Delta'))\sum \limits_{\substack{0\leq u_i \leq M \\ 1\leq i \leq k}} e(b\cdot u) \frac{1}{\varphi(2^{u_i+2})} \pi(N)+O(1),
\end{align} where we have gathered $\Delta$ and $\Delta'$ since the solutions to the congruence system define a residue class with the same modulus by \Cref{CRT} and by \Cref{PNTAP}

\smallskip 

Let $\alpha=(1/2,\ldots,1/2)$ and we will now prove that the systems associated with $\Delta$ and $\Delta'$ sum to $0$ in the final sum. Indeed, we have \begin{align*} 
&e(\alpha \cdot \Delta)=e\left(\frac{1}{2} \sum \limits_{1\leq j \leq k} \sum \limits_{1\leq r \leq j} \lambda_r \right). \\
&e(\alpha \cdot \Delta')=e\left(\frac{1}{2} \sum \limits_{1\leq j \leq k} \sum \limits_{1\leq r \leq j} \lambda_r + \frac{1}{2}(k-i+1) \right)=-e(\alpha \cdot \Delta),
\end{align*} since $k$ and $i$ are odd. Therefore, we have $S_{\alpha}(N)=O(1)$ and the theorem is proved. 

\end{proof}

\subsection{Second statement of \Cref{main_thm}}

In this subsection, we prove \eqref{main_thm2} for all $k \in \mathbb{N}$. Let $k \in \mathbb{N}$ and $\alpha=(1/2,0,\ldots,0,1/2)$. Therefore, we have $ \lVert \tilde{\alpha_0} \rVert=0$, whether $k$ is odd or not, and \Cref{First thm} is insufficient. 

\begin{proof}
We have \begin{align*} 
&e(\alpha \cdot \Delta)=e\left( \frac{1}{2} (\lambda_1+\lambda_1+\cdots+\lambda_{k}) \right) \\ 
&e(\alpha \cdot \Delta)=e\left( \frac{1}{2} (\lambda_1+\lambda_1+\cdots+\lambda_{k}+1) \right)=-e(\alpha \cdot \Delta).
\end{align*} Therefore, by \eqref{final_sum}, we can prove \eqref{main_thm2} in a similar way as \eqref{main_thm1} for the odd case. 
\end{proof}

\begin{remark}
Notice that in the study of $\mathcal{S}_k(N)$, it was impossible to gather $k$ odd and $k$ even in the same proof. 
\end{remark}

\section{Related open problems}

We conclude this paper with two related open problems. 

\begin{problem}
Study the correlation of order $k$ of the Rudin--Shapiro sequence in the general case, not only for consecutive terms. 
\end{problem}

One can define an extension of the Rudin--Shapiro sequence to any fixed binary word $\omega$ in the following way. Let $e_{\omega}(n)$ be the number of occurrences of the binary $\omega$ in the binary expansion of $n$. Then the sequence $\mathcal{S}_{\omega}=((-1)^{e_{\omega}(n)})_{n\geq 0}$ is a $2$-automatic sequence and generalize the Rudin--Shapiro sequence since $\mathcal{R}=\mathcal{S}_{\tL\tL}$. 

\begin{problem}
Extend \Cref{main_thm} to sequence $\mathcal{S}_{\omega}$ for any fixed binary word $\omega$ of length $k\geq 2$. 
\end{problem}

\bibliographystyle{amsplain}
\bibliography{biblio}

\end{document}